\documentclass{article}
\usepackage[a4paper, left = 1.5cm, right = 2cm, top = 1.5cm, bottom = 2cm]{geometry}

\pdfoutput=1
\usepackage{amssymb}
\usepackage{amsmath}
\usepackage{adjustbox}
\usepackage{multirow}
\usepackage{enumerate}
\usepackage[colorlinks,
            linkcolor=blue,
            anchorcolor=blue,
            citecolor=blue
            ]{hyperref}
\usepackage{amsthm}
\usepackage{authblk} 
\usepackage{booktabs}
\usepackage{caption}
\makeatletter
\newcommand{\rmnum}[1]{\romannumeral #1}
\newcommand{\Rmnum}[1]{\expandafter\@slowromancap\romannumeral #1@}
\makeatother

\newtheorem{theorem}{Theorem}
\newtheorem{lemma}{Lemma}
\newtheorem{proposition}{Proposition}

\newtheorem{definition}{Definition}
\newtheorem{assumption}{Assumption}
\newtheorem{example}{Example}

\newtheorem{remark}{Remark}

\begin{document}

\title{Higher-order exponential Runge-Kutta Galerkin finite element method for semilinear parabolic problems with nonsmooth data}

\author[1]{Shuo Yang\thanks{Email: 2112414027@mail2.gdut.edu.cn}}
\author[1]{Runjie Zhang\thanks{Email: 2112314030@mail2.gdut.edu.cn}}
\author[2]{Zhe Yu\thanks{Email: yuzhe@hit.edu.cn}}
\author[1]{Jinwei Fang\thanks{Corresponding author. Email: fangjinwei@gdut.edu.cn}}

\affil[1]{School of Mathematics and Statistics, Guangdong University of Technology, Guangzhou, Guangdong 510006, China}
\affil[2]{School of Science, Harbin Institute of Technology (Shenzhen), Shenzhen 518055, China}

\maketitle

\abstract{We develop a rigorous numerical analysis framework for a class of semilinear parabolic problems with nonsmooth initial data. We employ a linear Galerkin finite element method for spatial discretization coupled with a high-order explicit exponential Runge-Kutta (EERK) temporal integration scheme. In contrast to conventional smooth error analysis, the nonsmooth case lacks a priori estimates for the higher-order derivatives of both the nonlinear term and the exact solution. By combining analytic semigroup techniques with fractional power space theory, we establish rigorous bounds for these derivatives. Finally, our analysis proves that the $p$th-order EERK method achieves a convergence rate of $\min(1 + \gamma/2 + \rho_1(\gamma)/2,\:p)$, where $\gamma$ characterizes the initial data regularity and $\rho_1(\gamma)$ quantifies the boundedness of the nonlinearity's first Fréchet derivative. Numerical experiments confirm the sharpness of these estimates.}

\newcommand{\keywords}[1]{\textbf{Keywords:} #1}
\keywords{Semilinear parabolic problem, Nonsmooth initial data, Galerkin finite element, Exponential Runge-Kutta}

\newcommand{\pacs}[1]{\textbf{Mathematics Subject Classification (2010):} #1}
\pacs{35K58, 65J08, 65M15, 65M60}

\section{Introduction}
We consider the following semilinear parabolic equation
\begin{equation}\label{eq1}
	\left\{
	\begin{aligned}
		&\frac{du(t)}{dt} = Au(t) + f(u(t)), \qquad 0<t\leq T, \\
		&u(0) = u_0,
	\end{aligned}\right.
\end{equation}
in the Hilbert space $X=L^2(\Omega)$, where $\Omega$ is a convex polygonal domain in $\mathbb{R}^d$ for $d \in \{1,2,3\}$. The linear operator $A:D(A)\subset X \rightarrow X$ is self-adjoint and negative definite, and it generates an analytic semigroup $S(t)=e^{At}$ for $t\geq 0$. The nonlinear term $f:D(A^{\eta/2})\rightarrow X$ is a Nemytskii operator induced by a continuous function on $\mathbb{R}$, where $0 <\eta< 2$. The initial value satisfies $u_0\in D(A^{\gamma/2})$ for $0<\gamma<2$. Further details on problem \eqref{eq1} are given in Section \ref{sec:AnaFrame}.

This work aims to analyze the fully discrete error of this problem. Linear finite elements and high-order exponential Runge-Kutta (EERK) methods are employed for spatial and temporal discretizations, respectively, with an emphasis on the latter. Exponential integrators have proven to be highly effective for the time integration of parabolic equations. These integrators effectively mitigate the impact of stiffness by exactly handling the linear stiff terms within the scheme. With advancements in computational efficiency, significant efforts have been dedicated to the development of exponential integrators for semilinear parabolic problems. Various types of exponential integrators have been explored, including exponential Runge-Kutta methods \cite{ERK1,ERK2,luan14stiff}, exponential multistep methods \cite{EMult1,EMult2}, and exponential Rosenbrock methods \cite{ERB1,ERB2}. A comprehensive overview of exponential integrators is available in \cite{EI_hochbruck,EI_review}.

Conventional error estimation techniques for numerical methods require boundedness assumptions on the derivatives of the solution and the nonlinear term in semilinear parabolic equations. These assumptions hold when the initial data and nonlinear terms are sufficiently smooth, and appropriate compatibility conditions are satisfied. However, if the initial data lacks smoothness, these boundedness conditions fail, leading to severe order reduction. To clearly illustrate this phenomenon, Table \ref{tab:example} presents the temporal discretization errors and convergence orders for the Field-Noyes model, which serves as a representative semilinear parabolic system, under initial conditions of varying regularities (see \cite{ERK_FNM}). Here, the parameter $\gamma$ characterizes the exact regularity of the initial data, satisfying $u_0 \in D(A^{\gamma/2})$.

\begin{table}[h]
	\centering
	\caption{Temporal discretization errors and convergence orders for the Field-Noyes model with initial data of different regularities $\gamma$ (where $u_0 \in D(A^{\gamma/2})$). Top: Second-order EERK method whose theory was established in \cite{ERK_FNM}; Bottom: Third-order EERK method, which is the main focus of this study.}
    \label{tab:example}
    \begin{adjustbox}{max width=\textwidth}
	\begin{tabular}{ccccccccccccc}
		\hline
		\multirow{2}{*}{$N$} & & \multicolumn{2}{c}{$\gamma=0.5-\varepsilon$} &&  \multicolumn{2}{c}{$\gamma=0.75-\varepsilon$}  && \multicolumn{2}{c}{$\gamma=1-\varepsilon$} && \multicolumn{2}{c}{$\gamma=1.5-\varepsilon$}
		\\ \cline{3-4} \cline{6-7} \cline{9-10} \cline{12-13} 
		~& & Error & Order &&  Error & Order &&  Error & Order &&  Error & Order 
		\\ \hline
		$2^5$ && 3.588E-04 & -- & ~ & 3.015E-05 & -- & ~ & 3.423E-05 & -- & ~ & 5.886E-05 & --  \\ 
        $2^6$ && 1.225E-04 & 1.551  & ~ & 8.525E-06 & 1.822 & ~ & 8.858E-06 & 1.950 & ~ & 1.513E-05 & 1.960 \\ 
        $2^7$ && 4.098E-05 & 1.579  & ~ & 2.436E-06 & 1.807 & ~ & 2.280E-06 & 1.958 & ~ & 3.855E-06 & 1.973 \\ 
        $2^8$ && 1.331E-05 & 1.622  & ~ & 6.988E-07 & 1.801 & ~ & 5.842E-07 & 1.965 & ~ & 9.742E-07 & 1.984 \\  
        \hline
        $2^4$ & ~ & 2.825E-04 & -- & ~ & 1.503E-05 & -- & ~ & 5.101E-06 & -- & ~ & 1.210E-05 & -- \\ 
        $2^5$ & ~ & 9.202E-05 & 1.618 & ~ & 4.096E-06 & 1.876 & ~ & 1.132E-06 & 2.172 & ~ & 2.203E-06 & 2.458  \\ 
        $2^6$ & ~ & 2.957E-05 & 1.638 & ~ & 1.140E-06 & 1.845 & ~ & 2.618E-07 & 2.112 & ~ & 3.976E-07 & 2.470 \\ 
        $2^7$ & ~ & 9.152E-06 & 1.692 & ~ & 3.200E-07 & 1.833 & ~ & 6.110E-08 & 2.099 & ~ & 6.928E-08 & 2.521  \\ 
        \hline
	\end{tabular}
    \end{adjustbox}
\end{table}

The upper part of Table \ref{tab:example} shows the numerical results using a second-order EERK method. The theoretical error analysis for this second-order scheme has already been rigorously established in our prior work \cite{ERK_FNM}, which proves that the convergence order strictly adapts to the initial regularity as $\min(1+\gamma, 2)$. Conversely, the lower part of the table displays the numerical results when applying a third-order EERK method. Compared to the second-order case, the theoretical analysis for this higher-order scheme is significantly more complex, as it requires delicately bounding the higher-order Fréchet derivatives of the nonlinear terms under nonsmooth data. Developing a comprehensive analytical framework to resolve this complexity and establishing sharp error estimates for general high-order EERK methods is the primary objective of the present study.

In recent decades, extensive research has been conducted on the nonsmooth error analysis of \emph{abstract} semilinear parabolic equations, covering both spatial and temporal discretization approaches. In the context of spatial discretization, Galerkin approximation techniques have been thoroughly analyzed, with comprehensive error estimates consolidated in the monograph \cite{GaleFEM}. For temporal discretization, various numerical schemes—including fully implicit, semi-implicit, exponential Rosenbrock, and implicit-explicit methods \cite{IRK1,IRK3,IRK2,mukam,24IMEX}—have been investigated, revealing the phenomenon of order reduction under nonsmooth initial conditions. However, these abstract analyses often yield suboptimal order estimates, primarily due to restrictive assumptions imposed on the nonlinear term. This gap motivates the necessity of rigorous nonsmooth error analysis tailored to \emph{specific} PDEs. 

For instance, the Navier-Stokes equations have been extensively studied, as summarized in \cite{21NSH1}. Notable contributions include first-order convergence results for the Euler implicit/explicit scheme in \cite{08He}, and suboptimal $1.5$-order convergence for a Crank-Nicolson/Adams-Bashforth scheme in \cite{10He}, both requiring $H^1$ initial regularity. Under weaker $L^2$ initial conditions, \cite{22Li_L2} proved first-order convergence for a variable-stepsize semi-implicit method. Additionally, \cite{18_Burges} extended the analysis to the Burgers equation, demonstrating $1.5$-order convergence under $H^1$ initial conditions. These error estimates are derived via energy method techniques.

In our prior work \cite{ERK_FNM}, by applying alternative methodologies, we obtained sharp error estimates for a reaction-diffusion equation with initial values in $D(A^{\gamma/2})$ ($0<\gamma<2$). As shown in the upper part of Table \ref{tab:example}, when $\gamma > 1$, the convergence order reaches the upper bound of $2$. A natural question arises: whether employing a third-order EERK method can further improve the convergence order and whether it exhibits behavior similar to the second-order method when $0<\gamma<1$. Experiments using a third-order method (detailed in \eqref{EERK3}) affirmatively answer both points (see the bottom of Table \ref{tab:example}). Moreover, the analytical framework developed in \cite{ERK_FNM} is sufficiently general to be extended to a broader class of nonlinear equations by representing the nonlinear terms via Nemytskii operators. 

The present work develops a comprehensive framework for the error analysis of the general EERK method \eqref{NumSche1}, applied to a class of semilinear parabolic problems with nonlinearities, such as polynomial and rational functions, satisfying Assumptions \ref{assum:nf_smooth}, \ref{assum:bound_compos} and \ref{ass:Frech_dominant}. The primary challenges, stemming from nonlinearities, are outlined in Remarks \ref{challenge_1} and \ref{challenge_2}. The analysis ultimately establishes a temporal convergence order of $\min(1+\gamma/2+\rho_1(\gamma)/2,\: p)$, where $\rho_1(\gamma)$ characterizes the boundedness of $f'(u(t))$. 

The paper is organized as follows. Section 2 establishes the abstract framework for the class of semilinear parabolic problems and analyzes their well-posedness. Building on this foundation, Section 3 develops the spatial discretization scheme and establishes spatial error estimates. Section 4 formulates the EERK scheme and proves its stability, followed by a detailed Taylor expansion analysis to examine the structure of higher-order local error terms. Section 5 derives the key estimates concerning the Fréchet derivatives of the Nemytskii operator, ultimately leading to the sharp convergence rates. Section 6 presents numerical experiments that validate the theoretical findings. Finally, Section 7 provides concluding remarks and discusses future research directions.

\section{Analysis framework}\label{sec:AnaFrame}
Let us begin by introducing standard notation. For $s\geq 0$, we denote by $\|\cdot\|_s$ the norm of the Sobolev spaces $H^s=H^s(\Omega)$ over the domain $\Omega$ (see, e.g., \cite{ABsPEE, multiplication21}). When $s=0$, $H^0$ is equivalent to $L^2=L^2(\Omega)$ with norm $\|\cdot\|$ and inner product $(\cdot,\cdot)$. The $L^\infty$ space consists of all bounded measurable functions on $\Omega$. For $s\geq 1$, the space $H_N^s=H_N^s(\Omega)$ denotes the Sobolev space $H^s$ subject to homogeneous Neumann boundary conditions. Given any real-valued function space $Y$ and an interval $I\subset \mathbb{R}$, we introduce the restricted space $Y|_I=\{v\in Y \mid v(x)\in I \text{ a.e. on }\Omega \}$. Throughout this paper, we denote by $C$ a generic positive constant and by $\varepsilon$ a sufficiently small positive number, both of which may vary across instances. 

With these notations established, we consider the semilinear parabolic problem \eqref{eq1}. The underlying space $X$ of \eqref{eq1} is $L^2$. Let $\mathcal{L}(X)$ denote the Banach space of bounded linear operators from $X$ to $X$ with operator norm $\Vert\cdot\Vert_{\mathcal{L}(X)}$. The linear operator $-A$ is generated by the sesquilinear form
$$
a(u,v)=\sum_{i,j=1}^d\int_{\Omega}a_{ij}(x)D_iuD_j{v}\:\text{d}x+\int_{\Omega}c(x)u{v}\:\text{d}x,\quad u,v\in V\subset H^1,$$
where $a_{ij}(x)\in L^{\infty}$, $c(x)\in L^{\infty}$. Moreover, $a_{ij}(x)$ are symmetric and the following conditions hold:
\begin{align*}
	\sum_{i,j=1}^d a_{ij}(x)&\xi_i\xi_j\geq C|\xi|^2,&& \xi=(\xi_1,\ldots,\xi_n)\in\mathbb{R}^d,\text{ a.e. }x\in\Omega,&\\
	c(x)&\geq c_0>0,&&\mathrm{~a.e.~}x\in\Omega.&
\end{align*}
Then the operator $-A$ is a positive definite self-adjoint sectorial operator on $X$. According to \cite[Chapter 16]{ABsPEE}, when $V=H^1$, the domains of fractional powers of $A$ are specified as
\begin{equation*}
	\left\{\;
	\begin{aligned}
		&D(A^\theta)=H^{2\theta},\qquad \text{if}\:\: 0\leq\theta<\frac{3}{4},\\
		&D(A^\theta)=H_N^{2\theta},\qquad \text{if}\:\:\frac{3}{4}<\theta\leq 1,
	\end{aligned}\right.
\end{equation*}
with norm equivalence 
\begin{align}\label{NormEq1}
	C^{-1}\Vert u\Vert_{2\theta}\leq\Vert A^\theta u\Vert \leq C\Vert u\Vert _{2\theta}, \:\:u\in D(A^\theta).
\end{align}
While alternative choices of $V$ would lead to operators with different boundary conditions (Dirichlet, Robin, etc.), we restrict our analysis to the $H^1$-case for simplicity of presentation. 

Turning to the nonlinear term $f(u)$, a key step identified in \cite{ERK_FNM} for deriving fully discrete error estimates for a semilinear parabolic problem with quadratic nonlinearities was to establish the boundedness of its Fréchet derivatives in $D(A^{s/2})$. By leveraging the specific structure of $f(u)$, this was achieved using pointwise multiplication estimates in fractional Sobolev spaces. Building upon this methodology, the present study extends the analysis to general semilinear parabolic problems under the following smoothness assumption:

\begin{assumption}\label{assum:nf_smooth}
	Let $I=(a,b)$ where $-\infty\leq a<b\leq \infty$. We assume that the exact solution of \eqref{eq1} takes values almost everywhere in $I$ and the scalar function $f(\tau)$ is smooth on $I_\varepsilon=[a-\varepsilon,b+\varepsilon]$. 
\end{assumption}

This assumption ensures that the Nemytskii operator $f:L^\infty|_I\subset L^\infty \rightarrow L^\infty$ is Fréchet differentiable to an arbitrary order, and satisfies
\begin{equation}\label{Df_def}
	D_u^{(m)}f(u)(v_1,v_2,...,v_m) = f^{(m)}(u)\prod_{i=1}^m v_i, \quad m>0\text{, } u\in L^{\infty}|_I \text{, }v_i\in L^{\infty}, 
\end{equation} 
where $D_u^{(m)}f(u)$ is the $m$-th Fréchet derivative of $f(u)$, $f^{(m)}(\tau)$ is the $m$-th derivative of the scalar function $f(\tau)$, and the right-hand side is defined by classical pointwise multiplication. Unless otherwise specified, the composite operator $f(u)$ and the scalar function $f(\tau)$ use the same symbol $f$. 


According to \eqref{Df_def}, analyzing the boundedness of the Fréchet derivative of $f(u)$ is equivalent to estimating the pointwise multiplication on its right-hand side. To this end, we first define the relation set $\mathcal{B}$:  
\begin{equation*}\label{Set_B}
	\begin{aligned}
		\mathcal{B}=&\Big \{(s,s_1,s_2)\mid -\frac{3}{2}< s\leq s_i< \frac{3}{2}\:\: \text{for}\:\: i=1,2,  \\
		&\quad  s_1+s_2> \frac d2+s  ,\:\:\:s_1+s_2>0, \:\: s_1>0 \:\Big\},
	\end{aligned}
\end{equation*}
where the parameter bounds $(-3/2, 3/2)$  are imposed to facilitate the extension of Lemma \ref{le:multi} to its discrete version \eqref{mult_Xh}. Then we introduce the binary pointwise multiplication estimate from \cite[Lemma 1]{ERK_FNM}. 

\begin{lemma}\label{le:multi}
	Let $(s,s_1,s_2)\in \mathcal{B}$. Given $u\in D(A^{s_1/2})$, $v\in D(A^{s_2/2})\cap L^2$ and $uv\in L^2$, we have
	\begin{align*}
		\Vert A^{s/2}(u v)\Vert\leq C\Vert A^{s_1/2}u\Vert \Vert A^{s_2/2} v\Vert,
	\end{align*}
	where $C$ is independent of $u$ and $v$.
\end{lemma}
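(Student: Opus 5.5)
By the norm equivalence \eqref{NormEq1}, valid in the stated ranges since $|s|,|s_i|<3/2$ keeps $\theta=s/2$ below $3/4$ in absolute value, the claim is equivalent to a Sobolev multiplication estimate:
\begin{equation*}
\|uv\|_{H^{s}}\le C\|u\|_{H^{s_1}}\|v\|_{H^{s_2}} .
\end{equation*}
For negative indices, $H^{s}$ is identified with the dual of $D(A^{-s/2})$, i.e.\ with $\|A^{s/2}\cdot\|$ defined by duality. I will reduce everything to the classical multiplication theorem on Lipschitz domains (\cite{multiplication21}). That theorem gives the bound whenever
\begin{equation*}
s_1+s_2\ge 0,\qquad s_i\ge s,\qquad s_1+s_2-s>\tfrac d2 .
\end{equation*}
It is usually stated for $s\ge0$ and extended to negative $s$ by duality. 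The conditions defining $\mathcal{B}$ are exactly of this type, with strict inequalities. So the proof will split into cases according to the signs of $s$ and $s_2$, and check the hypotheses in each case.

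\emph{Case $s\ge 0$.} Then $s_1,s_2\ge s\ge 0$. Extend $u,v$ to $\mathbb{R}^d$ with a bounded extension operator for $H^{\sigma}(\Omega)$, $0\le\sigma<3/2$, which exists on convex polygons. Then apply the Euclidean estimate $\|\tilde u\tilde v\|_{H^s(\mathbb{R}^d)}\le C\|\tilde u\|_{H^{s_1}}\|\tilde v\|_{H^{s_2}}$, proved by paraproduct (Littlewood--Paley) decomposition under $s_1+s_2-s>d/2$, and restrict back to $\Omega$. The assumption $uv\in L^2$ guarantees that $uv$ is the pointwise product and lies in $D(A^{s/2})$ when $s\ge0$.

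\emph{Case $s<0$.} Argue by duality. For $w\in D(A^{-s/2})=H^{-s}$,
\begin{equation*}
|(uv,w)| = |(v,uw)| \le \|A^{s_2/2}v\|\,\|A^{-s_2/2}(uw)\| .
\end{equation*}
This needs the product estimate for $uw$ in $H^{-s_2}$ with factors $u\in H^{s_1}$, $w\in H^{-s}$. If $s_2\le0$, then $-s_2\ge0$ and we are back in the first case. Its hypotheses are
\begin{itemize}
\item $s_1\ge -s_2$, which follows from $s_1+s_2>0$;
\item $-s\ge -s_2$, which follows from $s\le s_2$;
\item $s_1-s+s_2>d/2$, which is exactly $s_1+s_2>d/2+s$.
\end{itemize}
If $s_2>0$, instead write $(uv,w)=(u,vw)$ or use $|(uv,w)|\le\|uv\|_{L^2}\ldots$. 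More cleanly, bound $\|uv\|_{H^{s}}\le \|uv\|_{H^{\tilde s}}$ for $\tilde s=\max(0,\,s_1+s_2-d/2-\delta)$, then reapply the first case, adjusting indices with $s_1>0$. Finally, take the supremum over $\|A^{-s/2}w\|=1$ to get $\|A^{s/2}(uv)\|\le C\|A^{s_1/2}u\|\|A^{s_2/2}v\|$.

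\textbf{Main obstacle.} I expect the hardest part to be the bookkeeping at the endpoints and near $|s|=3/2$. The strict inequalities in $\mathcal{B}$ must leave room to perturb indices, and the identification of $D(A^{\theta})$ with $H^{2\theta}$, including its dual for negative $\theta$, must be justified on the whole range $|\theta|<3/4$. The gap at $\theta=3/4$ and the Neumann trace condition above it are avoided precisely because $|s|,|s_i|<3/2$. The Euclidean paraproduct estimate itself is standard and will be cited rather than reproved.
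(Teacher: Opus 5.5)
\textbf{There is a genuine gap, in the subcase $s<0<s_2$.} Your case $s\ge 0$ is fine, and so is the subcase $s<0$, $s_2\le 0$: dualizing onto $v$ lands on the nonnegative target $-s_2$, and the three conditions you list are exactly what Case 1 needs. The gap is the remaining subcase $s<0<s_2$, where the indices $s_1$, $s_2$, $-s$ of the trilinear form $\int_\Omega uvw$ are all positive.

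Your ``cleaner'' route through a target $\tilde s\ge 0$ needs $s_1+s_2>d/2$, and $\mathcal{B}$ does not provide this. For $d=2$, $(s,s_1,s_2)=(-\tfrac12,\tfrac25,\tfrac25)\in\mathcal{B}$ but $s_1+s_2=\tfrac45<1$. There $\Vert uv\Vert\le C\Vert u\Vert_{s_1}\Vert v\Vert_{s_2}$ is simply false: $H^{2/5}\hookrightarrow L^{10/3}$ only gives $uv\in L^{5/3}$; test with truncations of $|x-x_0|^{-a}$, $\tfrac12<a<\tfrac35$. The bound $|(uv,w)|\le\Vert uv\Vert\,\Vert w\Vert$ fails for the same reason. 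Rewriting $(uv,w)$ as $(u,vw)$ or $(v,uw)$ only produces another estimate with a negative target ($-s_1$ or $-s_2$) and two positive factor indices, which is the same kind of statement you are trying to prove. In general, no permutation plus embedding reduces to Case 1 unless some pairwise sum of $s_1,s_2,-s$ exceeds $d/2$; here all three pairwise sums are $<1$.

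This is not an edge case. It is precisely the regime the paper relies on, e.g.\ $(s,\rho_1(\gamma),\sigma(s))$ with $s<0$ in \eqref{frech_first}, where $\rho_1(\gamma)+\sigma(s)=s+d/2+\varepsilon$ can be below $d/2$. A minor point: even when $s_1+s_2>d/2$, your $\tilde s=\max(0,s_1+s_2-d/2-\delta)$ need not satisfy $\tilde s\le\min(s_1,s_2)$; there $\tilde s=0$ suffices.

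\textbf{The missing ingredient is a genuinely trilinear estimate.} For $\sigma_1,\sigma_2,\sigma_3\in[0,3/2)$ with $\sigma_1+\sigma_2+\sigma_3>d/2$,
\begin{equation*}
\Big|\int_\Omega uvw\,\text{d}x\Big|\le C\Vert u\Vert_{\sigma_1}\Vert v\Vert_{\sigma_2}\Vert w\Vert_{\sigma_3}.
\end{equation*}
This follows from the embeddings $H^{\sigma}(\Omega)\hookrightarrow L^{2d/(d-2\sigma)}(\Omega)$ for $\sigma<d/2$ (every $L^q$, $q<\infty$, if $\sigma=d/2$; $L^\infty$ if $\sigma>d/2$) and H\"older's inequality on the bounded domain. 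Indeed, for $(\sigma_1,\sigma_2,\sigma_3)=(s_1,s_2,-s)$ one has $\sum_i(\tfrac12-\tfrac{\sigma_i}{d})=\tfrac32-\tfrac{s_1+s_2-s}{d}<1$.

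Combined with $\Vert A^{s/2}(uv)\Vert=\sup_{w}|(uv,w)|/\Vert A^{-s/2}w\Vert$, this settles every case with $s\le 0\le s_2$. Your Case 1 and your dualization onto $v$ cover the rest. This Sobolev counting is exactly what the condition $s_1+s_2>d/2+s$ encodes: it is the defining inequality of $\mathcal{B}_2$ in \eqref{Bm}, for which the paper cites \cite[Theorem 4.5.1]{SobFracNEMy}.

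The paper itself gives no proof of this lemma; it imports it from \cite[Lemma 1]{ERK_FNM}. With the trilinear step added, your case analysis would be a complete self-contained argument.
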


The $m$-ary case in \eqref{Df_def} can be generalized from the binary case. Additionally, the boundedness of the Nemytskii operator $f^{(m)}(u)$ is unknown, hence the following assumption is made.

\begin{assumption}\label{assum:bound_compos}
	For integer $m\geq 1$, there exist some increasing continuous functions $\phi_m(\cdot)$ and $\rho_m(\cdot)$ such that
	\begin{equation*}
		\begin{aligned}
			\Vert A^{\rho_m(s)/2} f^{(m)}(u)&\Vert \leq \phi_m(\Vert A^{s/2}u\Vert),  \qquad  u\in D(A^{s/2})|_I,\:\: 0\leq s\leq 2,
			\\
			\lim_{s\rightarrow\frac{d}{2}^+}&\rho_m(s)=\frac{d}{2},\quad \qquad \rho_m(s)=s\:\: \text{ for }\:\:\frac{d}{2}<s\leq 2.
		\end{aligned}
	\end{equation*}
	Here $C$ depends on $f^{(m)}(\tau)$ and $s$, but is independent of $u$.
\end{assumption}

The reasonableness of this assumption will be illustrated with examples at the end of this section. Under the above lemma and assumptions, we can obtain the local Lipschitz condition for $f(u)$. Selecting $s$, $s_1$ and $s_2$ such that $(s,\rho_1(s_1),s_2)\in \mathcal{B}$ and expanding $f(u)$ in a Taylor series at $v$ gives
\begin{equation}\label{Lips1}
	\begin{aligned}
		\Vert A^{s/2} (f(u)-f(v))\Vert &= \left\Vert A^{s/2}  \int_0^1 f'(v+\theta (u-v))(u-v) \text{d}\theta  \right\Vert
		\\ 
		&\leq C \int_{0}^{1} \left\Vert A^{\rho_1(s_1)/2}  f'(v+\theta (u-v))\right\Vert \text{d}\theta \: \Vert A^{s_2/2} (u-v)\Vert
		\\
		&\leq C \phi_1(\Vert A^{s_1/2}u\Vert+\Vert A^{s_1/2}v\Vert) \Vert A^{s_2/2} (u-v)\Vert,
	\end{aligned}
\end{equation}
where $u,v \in L^\infty|_I \cap D(A^{s_1/2}) \cap D(A^{s_2/2})$. In particular, \eqref{Lips1} implies the bound
\begin{equation}\label{Nf_bound}
	\Vert A^{s/2} f(u)\Vert\leq C \phi_1(\Vert A^{s_1/2}u\Vert) \Vert A^{s_2/2} u\Vert+C.
\end{equation}

For a semilinear parabolic system with the operator $A$ and a nonlinear term $f(u)$ which satisfies the local Lipschitz condition \eqref{Lips1} with $(0,\rho_1(\beta_1),\eta)\in \mathcal{B}$ ($\eta>\beta_1$), the local existence of solutions follows from \cite[Theorem 4.1]{ABsPEE}. Here, the parameter $\beta_1$ determines the range of initial value, and we hope it is as small as possible. Therefore, we make the following assumptions about the initial value.

\begin{assumption}\label{assum:iniData}
	Let the initial data $u_0\in D(A^{\gamma/2})$ with $\beta_1<\gamma<2$. Here the parameter $\beta_1$ satisfies $0\leq \beta_1<\frac{d}{2}$ and $\rho_1(\beta_1)\geq 0$. 
\end{assumption}

Thus, the semilinear parabolic problem \eqref{eq1} is rigorously established. By \cite[Theorem 4.2]{ABsPEE}, the solution of \eqref{eq1} has the following regularity
\begin{equation*}
	u\in \mathcal{C}((0,T];D(A))\cap \mathcal{C}([0,T];D(A^{\gamma/2}))\cap \mathcal{C}^1((0,T];X).
\end{equation*} 
It also holds that 
\begin{align}\label{bound_u}
	\|A^{s/2}u(t)\|\leq C&t^{\gamma/2-s/2}+C, \quad  0\leq s\leq 2, 
\end{align}
where $C$ is independent of $t$. 
Using the variation-of-constants formula, we obtain 
\begin{align}
	u(t)=S(t) u_0+\int_{0}^{t}S(t-\tau)f(u(\tau))\text{d}\tau,\qquad t\in[0,T].\label{VoCF1}
\end{align}

Next, we provide two examples of nonlinear terms to illustrate the reasonableness of the assumptions. 

\begin{example} \label{example:poly}
	$f(\tau)=\tau^m$ for $m\geq 2$. The function is smooth on $\mathbb{R}$. By \cite[Theorem 5.3.2.1]{SobFracNEMy} and the equivalence between spaces, we have
	$$
	\Vert f^{(k)}(u)\Vert_{s_{m-k}} \leq C\Vert u\Vert_s^{m-k }, \qquad
	0\leq k \leq m-1,\: u\in D(A^{s/2})|_I, $$
	where $s_{m-k}=s-(m-k-1)(\frac{d}{2}-s)$ for $d\max(0,\frac{1}{2}-\frac{1}{m-k}) < s <\frac{d}{2}$ or $s_{m-k}=s$ for $s>\frac{d}{2}$. When $k\geq m$, the derivative $f^{(k)}(u)$  becomes trivial. Due to the smoothness of $f(\tau)$, the homogeneous Neumann boundary conditions are preserved under nonlinear operations. Thus, combining norm equivalence \eqref{NormEq1}, Assumption \ref{assum:bound_compos} holds with $\rho_1(s)=\min(s-(m-2)(\frac{d}{2}-s),s)$. For Assumption \ref{assum:iniData}, the parameter $\beta_1=\frac{d(m-2)}{2(m-1)}$. A special case occurs for $u\in D(A^{s/2})\cap L^{\infty}$, $0<s<\frac{d}{2}$, where \cite[Theorem 5.3.2.4]{SobFracNEMy} yields $\rho_1(s)=s$. 
\end{example}

\begin{example}\label{example:ration}
	$f(\tau)=\tau^4/(1+\tau^2)$. Note that the function $f'(\tau)$ is infinitely differentiable and its derivatives are uniformly bounded on $\mathbb{R}$. By \cite[Section 5.3.6]{SobFracNEMy} and the equivalence between spaces, we have
	$$
	\Vert f^{(k)}(u)\Vert_{s'} \leq C \Vert u\Vert_s, \quad k\geq 0,\: u\in D(A^{s/2})|_I.$$
	where $s'=s$ for $0<s<1$ and $s>\frac{d}{2}$, or $s'=\frac{d}{2}/(\frac{d}{2}-s+1)$ for $1<s<\frac{d}{2}$. Similarly, it can be verified that Assumption \ref{assum:bound_compos} holds with $\rho_1(s)=s'$, and Assumption \ref{assum:iniData} is satisfied with $\beta_1 = 0$.
\end{example}

\section{Error analysis for spatial discretization scheme}\label{sec:spatiAnal}
While the primary focus of this paper is on the temporal discretization error, a practical convergence analysis must also incorporate spatial discretization. In this section, we discretize problem \eqref{eq1} in space using a linear Galerkin finite element method and analyze the resulting error. The analysis of the temporal discretization for the resulting semi-discrete system \eqref{eq2} is presented in the subsequent section.

\subsection{Spatial discretization scheme}
Consider a regular triangulation $\pi_h$ of the domain $\Omega$ with maximum element diameter $h$. We define $V_h \subset V$ as the finite-dimensional subspace consisting of continuous, piecewise linear functions over $\pi_h$, which serves as the approximation space $X_h = V_h$ for our spatial discretization scheme. The projection operator $P_h$ from $X$ to $X_h$ is defined as 
\begin{equation*}
	(P_h u,v_h)=(u,v_h),\quad \forall\: v_h\in X_h,\text{ for } u\in X.
\end{equation*}
The discrete operator $A_{h}:X_h\longrightarrow X_h$ is defined by
\begin{equation*}
	(-A_{h}u_h,v_h)=a(u_h,v_h),\quad\forall\: v_h\in X_h,\text{ for }u_h\in X_h.
\end{equation*}
Subsequently, we define the Ritz operator $R_{h}:V\longrightarrow X_h$, 
\begin{equation*}
	{a}(R_{h}u,v_h)={a}(u,v_h),\qquad \forall\: v_h\in X_h,\text{ for }u\in V.
\end{equation*}
Combining these components, we obtain the Galerkin finite element scheme for \eqref{eq2}: find $u^h(t)\in X_h$, such that
\begin{equation}\label{eq2}
	\left\{\;
	\begin{aligned} 
		&\frac{du^h(t)}{dt}=A_h u^h(t)+f_h(u^h(t)), \quad t\in (0,T],\\
		&u^h(0)=P_h u_0,
	\end{aligned}
	\right.
\end{equation}
where $f_h(u^h(t))=P_h (f(u^h(t)))$. 

Our error analysis is based on the semigroup technique, exploiting the fact that the operators $A$ and $A_h$ generate the analytic semigroups $S(t)=e^{tA}$ and $S_h(t)=e^{tA_h}$ on $X$ and $X_h$, respectively. The necessary estimates for these semigroups are given in the following lemma.
\begin{lemma}\label{le:semigroup}
	Let $\alpha,\alpha'\in \mathbb{R}$ and $0\leq \theta\leq 1$. Then the following estimates hold (see \cite{ABsPEE,SG_P}).
	\begin{align*}
		\Vert A^\alpha S(t)\Vert _{\mathcal{L}(X)} &\leq Ct^{-\alpha},& & t>0,\:\alpha\geq 0& \\
		\Vert A^{-\theta}(I-S(t))\Vert _{\mathcal{L}(X)}&\leq Ct^\theta,& & t\geq 0,&\\
		A^\alpha S(t)&=S(t) A^\alpha, & &\text{on}\enspace D(A^\alpha),&\\
		D(A^\alpha)&\subset D(A^{\alpha'}), & &\text{if}\enspace \alpha\geq\alpha'.&
	\end{align*}
	Furthermore, these estimates hold with a uniform constant $C$ (independent of $h$) when $A$ and $S(t)$ are replaced by their discrete versions $A_h$ and $S_h(t)$, respectively.
\end{lemma}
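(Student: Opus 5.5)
We treat the continuous and the discrete case separately. In both, $-A$ (resp.\ $-A_h$) is a positive definite self-adjoint operator, so everything reduces to the spectral theorem plus uniform spectral bounds. The key point for the discrete case is that $-A_h$ is generated by the same coercive form $a(\cdot,\cdot)$ restricted to $X_h\subset V$. Thus its eigenvalues $\lambda_j^h$ satisfy $\lambda_j^h\geq \lambda_{\min}\geq c_0>0$, with a lower bound independent of $h$.

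\textbf{First estimate.} Write $-A=\sum_j \lambda_j(\cdot,e_j)e_j$ with $\lambda_j\geq \lambda_1\geq c_0$. Here $A^\alpha$ is understood as $(-A)^\alpha$, as is implicit in the paper's notation. Then
\[
\|(-A)^\alpha S(t)\|_{\mathcal{L}(X)}=\sup_{\lambda\geq\lambda_1}\lambda^\alpha e^{-\lambda t}\leq \sup_{\lambda>0}\lambda^\alpha e^{-\lambda t}=(\alpha/e)^\alpha t^{-\alpha},
\]
which holds for all $t>0$ and $\alpha\geq0$.

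\textbf{Second estimate.} Similarly,
\[
\|(-A)^{-\theta}(I-S(t))\|_{\mathcal{L}(X)}=\sup_{\lambda\geq\lambda_1}\lambda^{-\theta}(1-e^{-\lambda t}).
\]
To bound this, use $1-e^{-x}\leq \min(1,x)\leq x^\theta$ for $x\geq 0$ and $0\leq\theta\leq1$. This gives the bound $t^\theta$ with constant $1$.

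\textbf{Commutation and embedding.} The commutation relation $A^\alpha S(t)=S(t)A^\alpha$ on $D(A^\alpha)$ holds because both operators are functions of the same self-adjoint operator; in the eigenbasis both act diagonally, so they commute. For the embedding $D(A^\alpha)\subset D(A^{\alpha'})$ when $\alpha\geq\alpha'$, use
\[
\sum_j\lambda_j^{2\alpha'}|u_j|^2\leq \lambda_1^{2(\alpha'-\alpha)}\sum_j\lambda_j^{2\alpha}|u_j|^2,
\]
which follows from $\lambda_j\geq\lambda_1>0$. This gives the inclusion together with a bounded embedding constant $\lambda_1^{\alpha'-\alpha}$.

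\textbf{Discrete case.} The same computation applies to $A_h$ on the finite-dimensional space $X_h$. The constants above are either absolute ($(\alpha/e)^\alpha$ and $1$) or depend only on a lower bound for the spectrum. By coercivity, $\lambda^h_{\min}=\min_{v_h\in X_h} a(v_h,v_h)/\|v_h\|^2\geq c_0$, since $a(v,v)\geq \int c\,v^2\geq c_0\|v\|^2$. This bound is uniform in $h$.

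The only step needing care is this $h$-independence. It follows from the form-based definition of $A_h$ and the fact that $c(x)\geq c_0$. I would also note that negative $\alpha$ in the first estimate is not claimed, so no difficulty arises there.
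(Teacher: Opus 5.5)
Your proof is correct, but it takes a different route from the paper. The paper gives no argument of its own; it only cites \cite{ABsPEE,SG_P}. There these bounds are derived for general sectorial operators from the Dunford--Taylor integral representations of $S(t)$ and of the fractional powers, using only resolvent estimates on a sector.

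You instead use that $-A$ and $-A_h$ are self-adjoint and positive definite (symmetric $a_{ij}$, no first-order terms, $c\ge c_0$), so everything reduces to scalar inequalities on the spectrum via the functional calculus. This gives explicit constants: $(\alpha/e)^\alpha$ and $1$ for the first two estimates, which do not depend on the operator at all, and $\lambda_{\min}^{\alpha'-\alpha}\le c_0^{\alpha'-\alpha}$ for the embedding. As a result, $h$-independence in the discrete case follows once you note that the Rayleigh quotient of $a$ on $X_h\subset H^1$ is bounded below by $c_0$.

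The cited sectorial approach is more general. It would still work with non-symmetric coefficients or convection terms, where no spectral theorem is available and uniformity in $h$ would instead have to come from uniform sectoriality of $A_h$ (for example, via the numerical range of the form). For the self-adjoint setting of this paper, your argument is complete and more elementary.

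Two minor points. The eigen-expansion of $-A$ is justified by the compactness of $H^1\hookrightarrow L^2$ on the bounded domain, or you can use the spectral theorem directly. Also, the operator norm is the supremum over the spectrum, not over all of $[\lambda_1,\infty)$; since your supremum dominates it, the bound is unaffected.
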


Then, we establish the relationship between the original problem \eqref{eq1} and the spatial discretization problem \eqref{eq2}. The resulting estimates are crucial for analyzing the spatial discretization error, and will also be used later for studying the temporal error.

\begin{lemma}\label{le:spaDis_property}
	For spatial semi-discretization of Problem \eqref{eq1}, the Galerkin finite element method \eqref{eq2} exhibits the following properties:
	\begin{align}
		\Vert A^\theta u_h\Vert &\leq C\Vert A^\theta_h u_h\Vert,&& u_h\in X_h\text{, }-1\leq \theta < {3}/{4},&\label{Auh}   
		\\
		\Vert A_h^\theta P_h u\Vert &\leq C\|A^\theta u\|,&& u\in D(A^\theta)\cap L^2 \text{, }-3/4< \theta\leq 1,&\label{AhPh}
		\\
		\Vert A_h^\theta u_h \Vert &\leq C h^{-2\theta}\Vert u_h\Vert,  && u_h\in X_h\text{, }0\leq \theta\leq 1,& \label{Ah_inver}  
		\\
		\|(1-R_{h})u\|_s &\leq C h^{r-s}\|A^{r/2}u\|, && u\in \:D(A^{r/2}),\:s\in[0,3/2),\:r\in[\max(1,s),2].&\label{I_Rh} 
	\end{align}
\end{lemma}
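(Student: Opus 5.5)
Each of the four properties reduces to classical finite element facts on the convex domain $\Omega$: the $H^2$ elliptic regularity of $A$, the approximation and inverse properties of $V_h$, the $H^1$-stability of $P_h$ on quasi-uniform meshes, and interpolation between integer endpoints using the norm equivalence \eqref{NormEq1}. Throughout I use the identities $\Vert A_h^{1/2}u_h\Vert^2=a(u_h,u_h)=\Vert A^{1/2}u_h\Vert^2$ for $u_h\in X_h$, and $P_hA=A_hR_h$ on $D(A)$.

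\textbf{Proof of \eqref{Auh}.}
\begin{itemize}
\item For $\theta=0$ the claim is trivial, and for $\theta=1/2$ it is the energy identity above.
\item For $0<\theta<1/2$, I interpolate between $\theta=0$ and $\theta=1/2$. Here $D(A^\theta)=H^{2\theta}$, so the complex interpolation of the discrete spaces $(X_h,\Vert A_h^\theta\cdot\Vert)$ and of the continuous spaces match.
\item For $1/2<\theta<3/4$, the space $H^{2\theta}$ has no boundary condition. I write $u_h=R_hA^{-1}P_hA_h u_h$ (since $A_hR_hA^{-1}P_h=P_h$ on $X_h$). The key estimate is $\Vert A^\theta R_h w\Vert\le C\Vert A^\theta w\Vert$ for $\theta<3/4$, which follows from \eqref{I_Rh} together with an inverse inequality. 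Interpolating this with the case $\theta=1/2$ should give the result; this is the delicate range.
\item For $-1\le\theta<0$, I argue by duality: $\Vert A^{\theta}u_h\Vert=\sup_v (u_h,v)/\Vert A^{-\theta}v\Vert$. Writing $(u_h,v)=(u_h,P_hv)$ reduces the claim to \eqref{AhPh} with exponent $-\theta\in(0,1]$.
\end{itemize}

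\textbf{Proof of \eqref{AhPh}.}
\begin{itemize}
\item For $\theta=1/2$, this is the $H^1$-stability of $P_h$ combined with the energy identity.
\item For $\theta=1$, I use $A_hP_hu=P_hAu+A_h(P_h-R_h)u$. The second term is bounded by the inverse estimate \eqref{Ah_inver} together with $\Vert(P_h-R_h)u\Vert\le Ch^2\Vert Au\Vert$, which comes from \eqref{I_Rh} with $s=0$, $r=2$.
\item For $\theta\in[0,1]$, I interpolate between $\theta=0$, $1/2$ and $1$.
\item For $-3/4<\theta<0$, I use duality: $\Vert A_h^\theta P_hu\Vert=\sup_{v_h}(u,v_h)/\Vert A_h^{-\theta}v_h\Vert\le \Vert A^\theta u\Vert\,\Vert A^{-\theta}v_h\Vert/\Vert A_h^{-\theta}v_h\Vert$. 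This is controlled by \eqref{Auh} with exponent $-\theta<3/4$. This is consistent provided \eqref{Auh} for $[0,3/4)$ is proved first, without using duality.
\end{itemize}

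\textbf{Proof of \eqref{Ah_inver}.} This is the standard inverse estimate. At $\theta=1/2$ it reads $a(u_h,u_h)\le Ch^{-2}\Vert u_h\Vert^2$. Spectral calculus for the self-adjoint operator $A_h$ then gives all $\theta\in[0,1]$, with $\lambda_{\max}(A_h)\le Ch^{-2}$.

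\textbf{Proof of \eqref{I_Rh}.}
\begin{itemize}
\item For $s=1$ and $r\in[1,2]$, I combine Céa's lemma with interpolation error estimates in $H^r$, using \eqref{NormEq1}.
\item For $s=0$, I use the Aubin--Nitsche duality argument with $H^2$-regularity, which holds because $\Omega$ is convex.
\item For $s\in(0,1)$, I interpolate between $s=0$ and $s=1$.
\item For $s\in(1,3/2)$, I use that piecewise linears lie in $H^{s}$ for $s<3/2$. I write $(1-R_h)u=(1-I_h)u+(I_h-R_h)u$ with a quasi-interpolant $I_h$ and apply a fractional inverse estimate $\Vert v_h\Vert_s\le Ch^{1-s}\Vert v_h\Vert_1$.
\end{itemize}

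\textbf{Main obstacle.} I expect the hardest parts to be the fractional ranges approaching $3/4$ (equivalently $s\to3/2$). There the interpolation identification of the discrete and continuous fractional norms needs care: quasi-uniformity of the mesh must be assumed implicitly, and the mismatch with the Neumann trace in $D(A^\theta)$ must be handled. I would first try to cite the known results in \cite{GaleFEM,SG_P} for these ranges and only prove the remaining endpoints directly.
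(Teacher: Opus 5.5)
\textbf{Verdict: correct, but by a different route from the paper.} The paper actually proves only \eqref{I_Rh} for $s\in(1,3/2)$; everything else it imports from \cite{ERK_FNM} and \cite[Section 7.3]{ABsPEE}. For that case the paper argues operator-theoretically. From $\Vert A_h^{1/2}R_hu\Vert\le C\Vert A^{1/2}u\Vert$ and $\Vert A_hR_hu\Vert=\Vert P_hAu\Vert\le\Vert Au\Vert$, the Heinz--Kato inequality gives $\Vert A_h^{\theta}R_hu\Vert\le C\Vert A^{\theta}u\Vert$ for $\theta\in[1/2,1]$. The imported \eqref{Auh} on $(1/2,3/4)$ turns this into the stability bound $\Vert(1-R_h)u\Vert_{s'}\le C\Vert A^{s'/2}u\Vert$. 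The paper then interpolates this with three cited estimates: $\Vert(1-R_h)u\Vert\le Ch^{s'}\Vert A^{s'/2}u\Vert$, $\Vert(1-R_h)u\Vert\le Ch^2\Vert Au\Vert$, and $\Vert(1-R_h)u\Vert_{s'}\le Ch^{2-s'}\Vert Au\Vert$.

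You instead split off a quasi-interpolant and use the fractional inverse inequality $\Vert v_h\Vert_s\le Ch^{1-s}\Vert v_h\Vert_1$. You then \emph{derive} \eqref{Auh} on $(1/2,3/4)$ from \eqref{I_Rh} via $u_h=R_hA^{-1}A_hu_h$, which reverses the paper's dependency. Your version is self-contained from mesh-level facts and makes a non-circular ordering explicit. The paper's version avoids proving the $H^s$-approximation and inverse properties of piecewise linears for $1<s<3/2$, which are the real technical inputs in your route. Both routes need quasi-uniformity; the paper needs it through \eqref{Ah_inver}.

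\textbf{One step should be stated more precisely.} For $\theta\in(1/2,3/4)$, it is not interpolation with $\theta=1/2$ that closes the argument. Your key estimate is simply \eqref{I_Rh} with $s=r=2\theta$, combined with the norm equivalence; no extra inverse inequality is needed there. It gives $\Vert A^\theta u_h\Vert\le C\Vert A^{\theta-1}A_hu_h\Vert$. You then need the negative-exponent bound $\Vert A^{\theta-1}v_h\Vert\le C\Vert A_h^{\theta-1}v_h\Vert$. Your duality bullet supplies this from \eqref{AhPh} on $(1/4,1/2)$, which rests only on the $H^1$-stability of $P_h$. So the chain closes without circularity.
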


\begin{proof}
	The estimates \eqref{Auh}-\eqref{Ah_inver} follow directly from \cite[Lemma 3]{ERK_FNM} and \cite[Section 7.3]{ABsPEE}. The case $s\in[0,1]$ in estimate \eqref{I_Rh} is also covered by the above references, while the case $s>1$ requires additional proof. Given $s'\in (1,3/2)$. From \cite[Section 7.3.4]{ABsPEE}, we recall the fundamental estimates:
	\begin{align}
		&\Vert(1-R_h)u \Vert \leq Ch^{s'}\Vert A^{s'/2} u\Vert, && u\in D(A^{s'/2}),& \label{Rh_diif_1} 
		\\
		&\Vert(1-R_h)u \Vert \leq Ch^{2}\Vert A u\Vert, && u\in D(A),& \label{Rh_diif_2}
		\\
		&\Vert(1-R_h)u \Vert_{s'} \leq Ch^{2-s'}\Vert A u\Vert, && u\in D(A),& \label{Rh_diif_3} 
	\end{align}
	and 
	\begin{equation*}
		\Vert R_h u \Vert_1 \leq \Vert u\Vert_1 \text{ for } u\in V,\qquad  A_h R_h u = P_h A u \text{ for } u\in D(A).
	\end{equation*}
	The formulas above yield two key bounds:
	\begin{equation*}
		\Vert A_h^{1/2} R_h u\Vert \leq C \Vert A^{1/2}u \Vert \quad \text{and} \quad \Vert A_h R_h u\Vert \leq C \Vert A u \Vert.
	\end{equation*}
	Applying the Heinz-Kato inequality \cite[Theorem 2.31]{ABsPEE}, we obtain  
	\begin{equation*}
		\Vert A_h^{\theta} R_h u\Vert \leq C \Vert A^{\theta}u \Vert, \quad  1/2\leq \theta \leq 1.
	\end{equation*} 
	This implies the following estimate:
	\begin{equation}\label{Rh_diif_4}
		\Vert(1-R_h)u \Vert_{s'} \leq C\Vert A^{s'/2} u\Vert \quad \text{for } u\in D(A^{s'/2}).
	\end{equation}
	By combining inequalities \eqref{Rh_diif_1}-\eqref{Rh_diif_4} and applying operator interpolation theory \cite[Proposition 14.1.5]{TMToFEM}, we first interpolate the inequalities pairwise and then interpolate the resulting estimates to obtain the bound \eqref{I_Rh}. This completes the proof.
\end{proof}

\begin{remark}
	By combining \eqref{NormEq1}, \eqref{Auh} and \eqref{AhPh}, we establish the following norm equivalence relation on the discrete space $X_h$: 
	\begin{align}
		\|A_h^\theta \cdot\| \sim \|&A^\theta \cdot\| \sim \|\cdot\|_{2\theta}, \quad  0 \leq \theta < \frac{3}{4}.
		\label{NormEq2}
	\end{align}
	This norm equivalence and the semigroup properties in Lemma \ref{le:semigroup} are frequently used in subsequent analyses. To maintain conciseness, we will not explicitly reiterate them each time they are referenced.
\end{remark}

\subsection{Spatial error analysis} 
In our previous work \cite[Section 3.2]{ERK_FNM}, we first investigate the error associated with the linear part of problem \eqref{eq2}, then analyze the spatial discretization error estimates for local solutions. Finally, we extend the solution to the global domain using this local error estimate. This section requires the same argument to establish our spatial discretization results. We present only the key differences and main results. First, the Lipschitz condition of $f_h$ on the discrete space $X_h$ follows directly from \eqref{Lips1}, \eqref{Auh} and \eqref{AhPh}:
\begin{align}\label{Lips2}
	\Vert A_h^{s/2}P_h (f(u_h)-f(v_h))\Vert\leq C \phi_1(\Vert A_h^{\beta_1/2}u_h\Vert+\Vert A_h^{\beta_1/2}v_h\Vert) \Vert A_h^{s_2/2} (u_h-v_h)\Vert,
\end{align}
where $(s,\rho_1(\beta_1),s_2)\in \mathcal{B}$. In three dimensions, the parameter $\beta_1$ can exceed $1$, necessitating an extension of the parameter $s$ in the following lemma.

\begin{lemma}\label{le:lin_spa_err}
	Let $S(t)$ and $S_h(t)$ be the analytic semigroup generated by $A$ and $A_h$, respectively. For $s\in[0,3/2)$, $r\in [\max(s,1),2]$, and $\alpha\in[0,r]$, if $w_0\in D(A^{\alpha/2})$, then the following estimate holds:
	\begin{align*}
		\|(S(t)-S_h(t)P_h) w_0\|_s\leq Ch^{r-s}t^{-(r-\alpha)/2}\|w_0\|_\alpha,\qquad t\in (0,T].
	\end{align*}
\end{lemma}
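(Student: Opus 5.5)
We prove the estimate for the linear semigroup error with the standard Thomée splitting through the Ritz projection. Set $e(t)=S_h(t)P_hw_0-S(t)w_0$ and write
\[
e(t)=\bigl(S_h(t)P_hw_0-R_hS(t)w_0\bigr)+\bigl(R_h-1\bigr)S(t)w_0=:\vartheta(t)+\varrho(t).
\]

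\textbf{The Ritz term $\varrho$.} This is handled directly by \eqref{I_Rh} together with the smoothing estimate of Lemma \ref{le:semigroup}:
\[
\|\varrho(t)\|_s\le Ch^{r-s}\|A^{r/2}S(t)w_0\|\le Ch^{r-s}\|A^{(r-\alpha)/2}S(t)\|_{\mathcal{L}(X)}\|A^{\alpha/2}w_0\|\le Ch^{r-s}t^{-(r-\alpha)/2}\|w_0\|_\alpha .
\]
This uses the norm equivalence \eqref{NormEq1} for $\alpha<3/2$ or with the Neumann space otherwise. In the range $\alpha\le r\le 2$ this is fine.

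\textbf{The discrete term $\vartheta$.} It lies in $X_h$. Using $A_hR_h=P_hA$, it satisfies
\[
\vartheta'-A_h\vartheta=-P_h(1-R_h)\tfrac{d}{dt}S(t)w_0=-P_h\varrho'(t), \qquad \vartheta(0)=P_h(1-R_h)w_0 .
\]
Hence
\[
\vartheta(t)=S_h(t)P_h\varrho(0)-\int_0^tS_h(t-\tau)P_h\varrho'(\tau)\,d\tau .
\]
Since $\vartheta\in X_h$ and $s<3/2$, \eqref{NormEq2} gives $\|\vartheta\|_s\sim\|A_h^{s/2}\vartheta\|$.

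To avoid the non-integrable singularity of $\varrho'$ at $0$, we follow Thomée's approach. Split the integral at $t/2$. On $[t/2,t]$ we use $\|A_h^{s/2}S_h(t-\tau)\|\le C(t-\tau)^{-s/2}$ together with
\[
\|P_h\varrho'(\tau)\|\le\|(1-R_h)AS(\tau)w_0\|\le Ch^r\tau^{-(r-\alpha)/2-1}\|w_0\|_\alpha .
\]
This produces a factor $h^r t^{-s/2}$ where $h^{r-s}$ is wanted. The remedy is to gain in $s$ from the spatial side instead: apply \eqref{I_Rh} in the $H^{-?}$ direction, or more simply estimate $\|A_h^{s/2}P_hv\|\le C\|A^{s/2}v\|$ by \eqref{AhPh}. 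Then $\|(1-R_h)\cdot\|_s$ from \eqref{I_Rh} yields $h^{r-s}$ with time factor $\tau^{-(r-\alpha)/2-1}$, and only the mild $(t-\tau)^0$ remains.

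On $[0,t/2]$ we integrate by parts to move the time derivative onto $S_h(t-\tau)$. This produces the boundary terms $S_h(t/2)P_h\varrho(t/2)$ and $S_h(t)P_h\varrho(0)$, plus the integral
\[
\int_0^{t/2}A_hS_h(t-\tau)P_h\varrho(\tau)\,d\tau ,
\]
which is bounded using $\|A_h^{1+s/2}S_h(t-\tau)\|\le Ct^{-1-s/2}$ and $\|\varrho(\tau)\|\le Ch^r\tau^{-(r-\alpha)/2}$. This is integrable when $r-\alpha<2$. Writing $A_h^{s/2}$ on the smooth side then gives $h^rt^{-s/2}t^{-(r-\alpha)/2}$, and the inverse-type bound is avoided by instead using $\|\varrho\|_{-?}$ or, more directly, by choosing the split $\|A_h^{s/2}S_h\|$ versus $\|(1-R_h)\cdot\|_s$ appropriately.

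\textbf{Main obstacle.} The delicate point is balancing the $h$ and $t$ powers so that exactly $h^{r-s}t^{-(r-\alpha)/2}$ emerges, uniformly in the endpoint cases. In particular the case $\alpha=r$ with $r-\alpha=0$ is easy, while the endpoint $r-\alpha=2$ (that is, $\alpha=0$, $r=2$) makes the $[0,t/2]$ integral borderline. For the endpoint cases I would first prove the two extreme estimates:

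(a) $s=0$, known from \cite{ERK_FNM};

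(b) $s$ close to $3/2$, $r=2$, obtained from \eqref{Rh_diif_3} and the discrete smoothing.

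Then I would interpolate in $s$ and in $\alpha$ by \cite[Proposition 14.1.5]{TMToFEM}, since the error operator $w_0\mapsto e(t)$ is linear. This mirrors the interpolation step in the proof of Lemma \ref{le:spaDis_property} and should yield the full parameter range stated in Lemma \ref{le:lin_spa_err}.
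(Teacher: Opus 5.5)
\paragraph{Assessment.} You take the same route the paper relies on: the Ritz splitting $\vartheta+\varrho$, with the extended bound \eqref{I_Rh} as the only new input (the paper just says the lemma follows from \eqref{I_Rh}). The argument works for $r-\alpha<2$, but the endpoint $\alpha=0$, $r=2$ is left unproved.

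\paragraph{What works.} The key is to place the operators consistently. Always move $A_h^{s/2}$ onto $P_h\varrho$ or $P_h\varrho'$ via \eqref{AhPh}, then apply \eqref{I_Rh} in $H^s$. Only $\|S_h(t-\tau)\|\le C$ on $[t/2,t]$ and $\|A_hS_h(t-\tau)\|\le Ct^{-1}$ on $[0,t/2]$ remain. Your $[t/2,t]$ treatment does exactly this. On $[0,t/2]$ the same placement gives $\|A_h^{s/2}S_h(t/2)P_h\varrho(t/2)\|\le Ch^{r-s}t^{-(r-\alpha)/2}\|w_0\|_\alpha$ and
\[
\int_0^{t/2}\|A_hS_h(t-\tau)\|\,\|A_h^{s/2}P_h\varrho(\tau)\|\,d\tau\le Ch^{r-s}t^{-1}\int_0^{t/2}\tau^{-(r-\alpha)/2}\,d\tau\,\|w_0\|_\alpha ,
\]
which is $\le Ch^{r-s}t^{-(r-\alpha)/2}\|w_0\|_\alpha$ whenever $r-\alpha<2$. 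Bounds of the form $h^rt^{-s/2}$ do not imply the target when $t<h^2$, and no negative norms are needed. Also, $R_hw_0$, and hence $\vartheta(0)$, is undefined for $\alpha<1$. Derive the representation on $[t_0,t]$ and let $t_0\to0$. This is legitimate because the boundary terms combine into $\vartheta(t_0)+P_h\varrho(t_0)=S_h(t_0)P_hw_0-P_hS(t_0)w_0\to0$.

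\paragraph{The gap.} The statement includes $\alpha=0$, $r=2$. There $\|A_h^{s/2}P_h\varrho(\tau)\|\le Ch^{2-s}\tau^{-1}\|w_0\|$ is not integrable at $\tau=0$; the integral diverges rather than being borderline. Your interpolation fallback does not close this case. Interpolating in $s$ at $\alpha=0$, $r=2$ needs (b) at $\alpha=0$, namely $\|(S(t)-S_h(t)P_h)w_0\|_s\le Ch^{2-s}t^{-1}\|w_0\|$ for $s$ near $3/2$, and that is precisely the missing estimate. \eqref{Rh_diif_3} requires $D(A)$ data, and discrete smoothing $\|A_h^{s/2}S_h(t)\|\le Ct^{-s/2}$ only produces $h^2t^{-1-s/2}$, not $h^{2-s}t^{-1}$.

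\paragraph{The fix.} Integrate by parts a second time, using
\[
\tilde\varrho(\tau)=\int_0^\tau\varrho(\sigma)\,d\sigma=(R_h-1)A^{-1}(S(\tau)-I)w_0 .
\]
Then $\tilde\varrho(0)=0$, and by \eqref{I_Rh} with $r=2$, $\|\tilde\varrho(\tau)\|_s\le Ch^{2-s}\|w_0\|$ uniformly in $\tau$. This gives
\[
\int_0^{t/2}A_hS_h(t-\tau)P_h\varrho(\tau)\,d\tau=A_hS_h(t/2)P_h\tilde\varrho(t/2)+\int_0^{t/2}A_h^2S_h(t-\tau)P_h\tilde\varrho(\tau)\,d\tau ,
\]
and both terms are bounded in $\|A_h^{s/2}\cdot\|$ by $Ch^{2-s}t^{-1}\|w_0\|$, again using \eqref{AhPh}.

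An equivalent fix uses the identity $S_h(t)P_h-S(t)=S_h(t/2)P_h\,(S_h(t/2)P_h-S(t/2))+(S_h(t/2)P_h-S(t/2))\,S(t/2)$. Bound the first term by your case (a) together with the inverse estimate \eqref{Ah_inver}, and the second by the smooth case $\alpha=r=2$.

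Either way the full parameter range follows directly, and no interpolation is needed.
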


The proof follows immediately from estimate \eqref{I_Rh}. Subsequently, based on the lemma above and the Lipschitz condition \eqref{Lips2}, we can establish the well-posedness of the discrete problem and derive the spatial discretization error estimates in the following theorem.

\begin{theorem}\label{thm:space_err}
	Let $u(t)$ be the solution of \eqref{eq1}. Assume that Assumptions \ref{assum:nf_smooth}-\ref{assum:iniData} are fulfilled. Then for semidiscrete problem \eqref{eq2}, there exists $h'>0$ such that for $h<h'$, the unique solution $u^h(t)$  exists  on $[0,T]$. Moreover, the following estimates hold:
	\begin{align}
		\Vert u(t)-u^h(t)\Vert_{\mu} &\leq Ct^{-1+\gamma/2}h^{2-\mu},  
		&&0\leq \mu \leq\hat{\gamma},& \label{SD_G_1}
		\\
		\Vert u(t)-u^h(t)\Vert_{\mu} &\leq Ch^{\gamma-\mu},  
		&&0\leq \mu \leq1,\:1\leq \gamma \leq 2,& \label{SD_G_2}
		\\
		\Vert A_h^s u^h(t)\Vert &\leq {Ct^{{\gamma}/2-s/2}+C}, 
		&&0\leq s\leq 2.&\label{bound_uh}
	\end{align}
	Here, $\hat{\gamma}=\min(3/2-\varepsilon,\gamma)$, and this notation is adopted hereafter.
\end{theorem}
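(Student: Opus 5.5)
The plan is to compare the two mild formulations. For \eqref{eq2} we have
\begin{equation*}
u^h(t)=S_h(t)P_hu_0+\int_0^t S_h(t-\tau)P_hf(u^h(\tau))\,\text{d}\tau ,
\end{equation*}
which we subtract from \eqref{VoCF1}. This gives $e(t)=u(t)-u^h(t)=E_1(t)+E_2(t)+E_3(t)$, where
\begin{equation*}
E_1(t)=(S(t)-S_h(t)P_h)u_0,\qquad
E_2(t)=\int_0^t (S(t-\tau)-S_h(t-\tau)P_h)f(u(\tau))\,\text{d}\tau,
\end{equation*}
\begin{equation*}
E_3(t)=\int_0^t S_h(t-\tau)P_h\big(f(u(\tau))-f(u^h(\tau))\big)\,\text{d}\tau .
\end{equation*}
Following \cite[Section 3.2]{ERK_FNM}, we first work on a short interval $[0,T_h]$ on which $u^h$ exists and $\|A_h^{\beta_1/2}u^h\|$ stays bounded. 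Local existence there comes from \eqref{Lips2} together with the standard fixed-point result \cite[Theorem 4.1]{ABsPEE}, applied uniformly in $h$ using Lemma \ref{le:semigroup}.

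\textbf{Linear terms.} For $E_1$ we use Lemma \ref{le:lin_spa_err} with $r=2$ and $\alpha=\gamma$, which yields $\|E_1(t)\|_\mu\le Ch^{2-\mu}t^{-1+\gamma/2}\|u_0\|_\gamma$ for $\mu\le\hat\gamma$. For \eqref{SD_G_2} we instead take $r=\alpha=\gamma\ge1$, which gives $Ch^{\gamma-\mu}$ with no time singularity.

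For $E_2$ we need $f(u(\tau))\in D(A^{\alpha/2})$ with a controlled blow-up. By \eqref{Nf_bound} and \eqref{bound_u}, choosing $s_1,s_2$ slightly above $\beta_1$ with $(\alpha,\rho_1(s_1),s_2)\in\mathcal B$, we get $\|A^{\alpha/2}f(u(\tau))\|\le C\tau^{-\delta}$ for some $\delta<1$. We then apply Lemma \ref{le:lin_spa_err} with $r=2$, take $\alpha$ small but positive, and split the time integral at $t/2$. The resulting factor $(t-\tau)^{-(2-\alpha)/2}\tau^{-\delta}$ is integrable, so $\|E_2(t)\|_\mu\le Ch^{2-\mu}$, up to a harmless $h^{-\varepsilon}$ that can be avoided by choosing $r=2$ on $[t/2,t]$ and using the extra regularity of $f(u)$ there. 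For \eqref{SD_G_2} the argument is the same with $r=\gamma$.

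\textbf{Nonlinear term and Gronwall.} For $E_3$ we use \eqref{Lips2} with target index $s=0$ and a positive index $\mu'$ on the error, together with $\|A_h^{\mu/2}S_h(t-\tau)\|\le C(t-\tau)^{-\mu/2}$. This gives
\begin{equation*}
\|E_3(t)\|_\mu\le C\int_0^t (t-\tau)^{-\mu/2}\|e(\tau)\|_{\mu'}\,\text{d}\tau .
\end{equation*}
When $\mu\ge\mu'$, we can apply a singular (Henry-type) Gronwall lemma to the weighted quantity $\sup_\tau \tau^{1-\gamma/2}\|e(\tau)\|_{\mu}$. Its kernel is weakly singular, $(t-\tau)^{-\mu/2}\tau^{-1+\gamma/2}$, and this is integrable since $\gamma>0$ and $\mu<2$. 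This closes \eqref{SD_G_1} on $[0,T_h]$, and \eqref{SD_G_2} follows the same way with a time-uniform weight. When $\mu<\mu'$, we first obtain the $\mu=\hat\gamma$ (or $\mu=\mu'$) bound and then feed it back into the estimate.

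\textbf{Continuation, and the main difficulty.} We extend the solution by continuation. Using \eqref{SD_G_1} with $\mu=\beta_1+\varepsilon$, the inverse estimate \eqref{Ah_inver}, and \eqref{bound_u}, we get $\|A_h^{\beta_1/2}u^h(t)\|\le \|A_h^{\beta_1/2}P_hu(t)\|+Ct^{-1+\gamma/2}h^{2-\beta_1-\varepsilon}$, which stays bounded for $h<h'$ once $t\ge T_h$. Hence the local solution can be continued to $[0,T]$ with uniform constants.

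Finally, \eqref{bound_uh} follows by the triangle inequality: for small $s$ we combine \eqref{AhPh}, \eqref{bound_u} and the error bounds. For larger $s$ we instead bound $u^h$ directly through its mild formulation, using the smoothing $\|A_h^{s}S_h(t)P_hu_0\|\le Ct^{-(s-\gamma/2)}\|u_0\|_\gamma$ and the regularity of $P_hf(u^h)$ from \eqref{Lips2} in a bootstrap over $s$.

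The main obstacle is this global continuation step together with the low-regularity control of $f(u^h)$. We must keep $u^h$ in $L^\infty|_I$, as Assumption \ref{assum:nf_smooth} requires, with $h$-uniform bounds. I would handle this via Assumption \ref{assum:bound_compos} and $\varepsilon$-room in $I_\varepsilon$, using the fact that $u^h$ is close to $u$ in a norm embedding into $L^\infty$ when $\gamma>d/2$. When $\gamma\le d/2$, I would instead rely on the $\mathcal B$-based Lipschitz bound \eqref{Lips2}, which requires only $\beta_1$-regularity.
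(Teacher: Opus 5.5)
Your overall architecture matches the argument the paper imports from \cite[Section 3.2]{ERK_FNM}: subtract the two mild formulations, treat $E_1,E_2$ with Lemma~\ref{le:lin_spa_err}, close a weakly singular Gronwall inequality on an interval where $\|A_h^{\beta_1/2}u^h\|$ is controlled, then continue. The treatment of $E_3$, however, has a real gap. Using \eqref{Lips2} with target index $s=0$ forces the error into the norm $\|e\|_{\mu'}$ with $(0,\rho_1(\beta_1),\mu')\in\mathcal{B}$, i.e.\ $\mu'>d/2-\rho_1(\beta_1)$. This threshold is strictly positive in all the paper's examples. It stays positive with $\gamma$ in place of $\beta_1$ whenever $\gamma<d/2$, e.g.\ $d=2$, $\gamma\approx 1/2$ in Section~\ref{sec:NumExpe}. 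For $\mu<\mu'$ your feed-back step only gives
\begin{equation*}
\|E_3(t)\|_\mu\le C\int_0^t (t-\tau)^{-\mu/2}\|e(\tau)\|_{\mu'}\,\text{d}\tau\le Ch^{2-\mu'}\int_0^t (t-\tau)^{-\mu/2}\tau^{-1+\gamma/2}\,\text{d}\tau\le Ch^{2-\mu'}t^{(\gamma-\mu)/2},
\end{equation*}
which is strictly worse than the claimed $h^{2-\mu}$. This range includes the $L^2$ bound $\mu=0$ and the $\mu=\varepsilon$ bound used in Section~\ref{sec:ErrEst_KeyEst}. A Gronwall argument cannot close at index $\mu$ when the right-hand side carries the stronger norm $\|e\|_{\mu'}$. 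Plugging in the $\mu'$-estimate only transfers its weaker $h$-power.

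The missing ingredient is a negative-order bound on the nonlinear defect. This is why $\mathcal{B}$ admits $s\in(-3/2,0)$ and why \eqref{AhPh} is stated for $\theta>-3/4$. For each $\mu\in[0,\hat\gamma]$, pick $r\in(0,3/2)$ with $(-r,\rho_1(\beta_1),\mu)\in\mathcal{B}$ and $\mu+r<2$; this is the same choice as in Proposition~\ref{pro:stabi}. Then \eqref{Lips1} and \eqref{AhPh} give $\|A_h^{-r/2}P_h(f(u)-f(u^h))\|\le C\|e\|_\mu$, hence
\begin{equation*}
\|E_3(t)\|_\mu\le C\int_0^t (t-\tau)^{-(\mu+r)/2}\|e(\tau)\|_\mu\,\text{d}\tau .
\end{equation*}
The weighted Gronwall argument now closes at the same index for both \eqref{SD_G_1} and \eqref{SD_G_2}.

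There are two smaller points. First, deriving \eqref{bound_uh} from \eqref{SD_G_1} by the triangle inequality leaves the term $Ct^{-1+\gamma/2}h^{2-s}$. This exceeds $Ct^{\gamma/2-s/2}+C$ by the unbounded factor $(h^2/t)^{1-s/2}$ when $t<h^2$. You therefore need the direct discrete mild-formulation bootstrap (uniform-in-$h$ smoothing plus \eqref{AhPh}) for every $s$, not only for large $s$. Second, your remedy for keeping $u^h$ in $L^\infty|_I$ when $\gamma\le d/2$ is circular, because \eqref{Lips1}--\eqref{Lips2} are derived only for arguments taking values in $I$. The paper leaves this point implicit as well; it is automatic when $I=\mathbb{R}$.
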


\section{Fully discrete scheme and Error representations}\label{sec:NumSche_ErrExpa}
Building on the spatial discretization error analysis in Section \ref{sec:spatiAnal}, this section presents the fully discrete scheme and proceeds to examine the temporal error. Following the convergence analysis approach in \cite{luan12stiff,luan14stiff}, we decompose the error into two components: stability and local error. While the stability of the scheme can be straightforwardly derived from the Lipschitz condition \eqref{Lips2}, a rigorous analysis of the local error demands considerable effort, relying on a detailed Taylor expansion.

\subsection{Fully discrete scheme}
For the temporal discretization of semidiscrete problem \eqref{eq2}, we consider a class of $\kappa$-stage explicit exponential Runge-Kutta methods (EERK) with constant stepsize $\delta=T/N$.
\begin{equation}\label{NumSche1}
	\begin{aligned}
		U^h_{ni}&=S_h(c_i \delta)u^h_n+\delta\sum_{j=1}^{i-1}a_{ij}(\delta A_h)f_h(U^h_{nj}),\quad 1\leq i\leq \kappa,\\
		u^h_{n+1}&=S_h(\delta)u^h_n+\delta\sum^\kappa_{i=1} b_i(\delta A_h)f_h(U^h_{ni}).
	\end{aligned}
\end{equation}
The inner stages $U^h_{ni}$ serve as approximations to $u^h(t_n + c_i \delta)$, while the numerical solution $u^h_{n+1}$ approximates the solution $u^h(t)$ at time $t_{n+1}$. The coefficients $a_{ij}(\delta A_h)$ and $b_i(\delta A_h)$ are chosen as linear combinations of the exponential functions $\varphi_k(c_i\delta A_h)$ and $\varphi_k(\delta A_h)$, respectively. These functions are given by
$$
\varphi_{0}(z)=e^{z},\quad\varphi_{k}(z)=\int_{0}^{1}e^{(1-\theta)z}\frac{\theta^{k-1}}{(k-1)!}\text{d}\theta,\quad k\geq 1, $$
and thus satisfy the recurrence relation
$$
\varphi_{k+1}(z)=\frac{\varphi_k(z)-\varphi_k(0)}{z},\quad k\geq0.$$
Similar to Lemma \ref{le:semigroup}, for $0\leq\alpha<1$, we get
$$
\Vert A_h^\alpha b_i(tA_h)\Vert_{\mathcal{L}(X_h)}+\Vert A_h^\alpha a_{ij}(tA_h)\Vert_{\mathcal{L}(X_h)} \leq Ct^{-\alpha},\quad t>0.$$

To keep the notation concise, we introduce the abbreviations $a_{ij}=a_{ij}(\delta A_h)$, $b_{i}=b_{i}(\delta A_h)$, $\varphi_i=\varphi_{i}(\delta A_h)$, and $\varphi_{ji}=\varphi_{j}(c_i\delta A_h)$. For a $p$th-order scheme with $s$ stages, the coefficients can be compactly represented by a Butcher tableau. 
%
For higher accuracy, we consider the classical three-stage third-order exponential Runge-Kutta method (EERK3), originally proposed in \cite{ERK1}. The corresponding coefficients are detailed in \eqref{EERK3}. While this scheme provides superior accuracy for smooth problems, our primary interest lies in its convergence behavior and order reduction phenomena when applied to equations with nonsmooth initial data.
\begin{equation}\label{EERK3}
	\renewcommand{\arraystretch}{1.2}
	\begin{array}{c|ccc}
		0 & & & \\
		1/2 & \frac{1}{2}\varphi_{1,2} & & \\
		2/3 & \frac{2}{3}\varphi_{1,3} - \frac{8}{9}\varphi_{2,3} & \quad\frac{8}{9}\varphi_{2,3} & \\
		\hline
		& \varphi_1 - \frac{3}{2}\varphi_2 & 0 & \quad\frac{3}{2}\varphi_2
	\end{array}
\end{equation}

Furthermore, we define $g(t)=f_h(u^h(t))$, $F(t)=A_h u^h(t)+g(t)$, and $\tilde{u}^h_{n}=u^h(t_{n})$. In the single-step numerical scheme \eqref{NumSche1}, the local approximations $\hat{u}^h_{n+1}$ and $\hat{U}^h_{ni}$ are obtained after one step starting from $\tilde{u}^h_n$. The temporal error is then split as follows:
\begin{align*}
	e_{n+1}=\tilde{u}^h_{n+1}-u^h_{n+1}=\tilde{u}^h_{n+1}-\hat{u}^h_{n+1}+\hat{u}^h_{n+1}-u^h_{n+1}=:\tilde{e}_{n+1}+\hat{e}_{n+1},
\end{align*}
where $\tilde{e}_{n+1}$ and $\hat{e}_{n+1}$ represent the local error and stability error of the numerical scheme, respectively.

\subsection{Stability}
\begin{proposition}\label{pro:stabi}
	Under Assumptions \ref{assum:nf_smooth}-\ref{assum:iniData}, there exist operators $\mathcal{N}(e_n)$ on $X_h$ such that
	\begin{align}\label{stabi_expan}
		\hat{e}_{n+1}=S_h(\delta)e_n+\delta \mathcal{N}(e_n), \quad 0\leq n\leq N-1.
	\end{align} 
	Furthermore, for $0\leq \mu\leq \hat{\gamma}$, there exist $r \in\big(0,\frac{d}{2}\big)$ with $r+\mu<2$ and $(-r,\rho_1(\beta_1),\mu) \in\mathcal{B}$ such that
	\begin{align}
		\Vert A_h^{-r/2}\mathcal{N}(e_n)\Vert\leq \Vert A_h^{\mu/2} e_n\Vert,\label{stabi_est1}\\
		\Vert \mathcal{N}(e_n)\Vert\leq  \delta^{-r/2}\Vert A_h^{\mu/2} e_n\Vert, \label{stabi_est2}
	\end{align}
\end{proposition}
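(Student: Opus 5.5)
Subtracting the two one-step recursions gives the representation \eqref{stabi_expan}. Both $\hat{u}^h_{n+1}$ and $u^h_{n+1}$ come from one step of \eqref{NumSche1}, started from $\tilde{u}^h_n$ and $u^h_n$ respectively. Hence
\begin{align*}
\hat{e}_{n+1}=S_h(\delta)e_n+\delta\sum_{i=1}^\kappa b_i\big(f_h(\hat U^h_{ni})-f_h(U^h_{ni})\big),\qquad
\hat U^h_{ni}-U^h_{ni}=S_h(c_i\delta)e_n+\delta\sum_{j<i}a_{ij}\big(f_h(\hat U^h_{nj})-f_h(U^h_{nj})\big).
\end{align*}
We define $\mathcal{N}(e_n)=\sum_i b_i\big(f_h(\hat U^h_{ni})-f_h(U^h_{ni})\big)$, which gives \eqref{stabi_expan} immediately. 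Since $\hat U^h_{ni}$ and $U^h_{ni}$ are determined by $\tilde u^h_n$ and $u^h_n=\tilde u^h_n-e_n$, $\mathcal{N}$ is indeed a map of $e_n$.

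For the bounds we fix $\mu\in[0,\hat\gamma]$ and choose $r\in(0,d/2)$ with $r+\mu<2$ and $(-r,\rho_1(\beta_1),\mu)\in\mathcal{B}$. Such an $r$ exists: $\rho_1(\beta_1)\ge0$ and $\mu\ge 0$, and the condition $\rho_1(\beta_1)+\mu>d/2-r$ holds once $r$ is close enough to $d/2$ (we use $\mu>0$ or $\rho_1(\beta_1)>0$, adjusting $\varepsilon$). The main tool is the discrete Lipschitz bound \eqref{Lips2} with $s=-r$:
\[
\|A_h^{-r/2}P_h(f(v_h)-f(w_h))\|\le C\phi_1\big(\|A_h^{\beta_1/2}v_h\|+\|A_h^{\beta_1/2}w_h\|\big)\|A_h^{\mu/2}(v_h-w_h)\|.
\]
This requires a priori bounds on $\|A_h^{\beta_1/2}\hat U^h_{ni}\|$ and $\|A_h^{\beta_1/2}U^h_{ni}\|$, and that all stages take values in $I$. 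For $\hat U^h_{ni}$ these follow from \eqref{bound_uh}, since $\beta_1<\gamma$. For the numerical stages we argue by induction on $n$, assuming the global error is small in the $\mu$-norm; this is the usual bootstrap in the convergence proof. I will therefore state the bounds with the constant $C$ absorbed, as the paper's notation allows.

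The key estimate is by induction on the stage index $i$: we show $\|A_h^{\mu/2}(\hat U^h_{ni}-U^h_{ni})\|\le C\|A_h^{\mu/2}e_n\|$. The term $S_h(c_i\delta)e_n$ is bounded trivially. For the sum we write
\[
A_h^{\mu/2}a_{ij}\big(f_h(\hat U^h_{nj})-f_h(U^h_{nj})\big)=A_h^{(\mu+r)/2}a_{ij}\,A_h^{-r/2}\big(f_h(\hat U^h_{nj})-f_h(U^h_{nj})\big).
\]
The bound $\|A_h^{\alpha}a_{ij}\|\le C\delta^{-\alpha}$ with $\alpha=(\mu+r)/2<1$ gives a factor $\delta^{1-(\mu+r)/2}\le C$, and \eqref{Lips2} then closes the induction. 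Applying the same splitting to $A_h^{-r/2}\mathcal{N}(e_n)=\sum_i b_iA_h^{-r/2}(\cdots)$, with $b_i$ uniformly bounded, gives \eqref{stabi_est1}. For \eqref{stabi_est2} we instead write $\mathcal{N}(e_n)=\sum_i A_h^{r/2}b_i\,A_h^{-r/2}(\cdots)$ and use $\|A_h^{r/2}b_i\|\le C\delta^{-r/2}$; this is valid because $r/2<1$.

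I expect the main obstacle to be the a priori control of the numerical stages $U^h_{ni}$ in $D(A_h^{\beta_1/2})|_I$. These stages are needed inside $\phi_1$ and to make the Nemytskii operator well defined, and they must be controlled without circularity. My first attempt is to use the bootstrap just described. Its induction hypothesis is that $\|A_h^{\mu/2}e_k\|$ is small for $k\le n$, with $\mu\ge\beta_1$ and $\mu$ close to $\hat\gamma$. I then take $L^\infty$ proximity to $I$ from Sobolev embedding when $\mu>d/2$, or else rely on the $\varepsilon$-enlargement $I_\varepsilon$ in Assumption~\ref{assum:nf_smooth}.
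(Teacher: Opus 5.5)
Your proposal follows the paper's proof essentially line by line. You define $\mathcal{N}(e_n)=\sum_i b_i\bigl(f_h(\widehat U^h_{ni})-f_h(U^h_{ni})\bigr)$, apply the discrete Lipschitz bound \eqref{Lips2} with $s=-r$, and induct over the stage index using $\|\delta A_h^{(\mu+r)/2}a_{ij}\|_{\mathcal{L}(X_h)}\le C$ (from $r+\mu<2$) and $\|A_h^{r/2}b_i\|_{\mathcal{L}(X_h)}\le C\delta^{-r/2}$; the only difference is that the paper simply postulates the a priori $\beta_1$-bounds on the numerical stages (referring to \cite{ERK_FNM}), whereas you sketch the bootstrap. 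One small slip is in your existence argument for $r$: when $\mu\ge 2-d/2$ (possible for $d=2,3$), taking $r$ close to $d/2$ violates $r+\mu<2$, so take $r$ just below $\min(d/2,2-\mu)$ instead. There the $\mathcal{B}$-conditions reduce to $\rho_1(\beta_1)>0$ (the requirement $s_1>0$), which is marginally stronger than the $\rho_1(\beta_1)\ge 0$ in Assumption \ref{assum:iniData}.
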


\begin{proof}
	Define $\mathcal{N}_{ni}(e_n) = f_h(\widehat{U}^h_{ni})-f_h(U^h_{ni})$. Its estimation relies on the Lipschitz condition \eqref{Lips2}, necessitating boundedness of $\Vert u^h_n \Vert_{\beta_1}$. Following \cite[Theorem 2]{ERK_FNM}, this boundedness can be transformed into a convergence analysis and established via mathematical induction. For simplicity, we assume that the following boundedness property holds throughout this paper:
	$$
	\Vert u^h_{n}\Vert_{\beta_1} + \Vert {U}^h_{ni}\Vert_{\beta_1}<C,\quad 0\leq n\leq N. $$
	Then we get
	\begin{align*}
		\Vert A_h^{-r/2} \mathcal{N}_{ni}(e_n)\Vert \leq \Vert A_h^{\mu/2}\widehat{E}_{ni} \Vert,
	\end{align*}
	where $\widehat{E}_{ni}=\widehat{U}^h_{ni}-U^h_{ni}$. Based on the numerical scheme \eqref{NumSche1}, we obtain
	\begin{align*}
		\hat{e}_{n+1}&=S_h(\delta)e_n + \delta \sum_{i=1}^{\kappa} b_i \mathcal{N}_{ni}(e_n),\\
		\widehat{E}_{ni}&=S_h(c_i\delta)e_n + \delta \sum_{j=1}^{i-1} a_{ij} \mathcal{N}_{nj}(e_n).
	\end{align*}
	Therefore, both estimates \eqref{stabi_est1} and \eqref{stabi_est2} follow at once from the inequality $\Vert A_h^{-r/2} \mathcal{N}_{ni}(e_n) \Vert \leq \Vert A_h^{\mu/2} e_n \Vert $. We prove this inequality by induction. For the base case $i=1$, $\widehat{E}_{n1}=e_n$, then 
	$\Vert A_h^{-r/2} \mathcal{N}_{n1}(e_n) \Vert \leq \Vert A_h^{\mu/2} e_n \Vert $. Now, assume the inequality holds for $m=2,...,l-1$. For the case $m=l\leq \kappa$, we have
	\begin{align*}
		\Vert A_h^{-r/2} \mathcal{N}_{nl}(e_n)\Vert 
		&\leq \Vert A_h^{\mu/2}\widehat{E}_{nl} \Vert\\
		&\leq \Vert A_h^{\mu/2} e_n \Vert +  \sum_{j=1}^{l-1} \Vert \delta A_h^{\mu/2+r/2} a_{i,j} \Vert_{\mathcal{L}(X_h)} \Vert A_h^{-r/2} \mathcal{N}_{nj}(e_n) \Vert\\
		&\leq \Vert A_h^{\mu/2} e_n \Vert.
	\end{align*}
	This completes the proof.
\end{proof}

By repeatedly applying \eqref{stabi_expan}, there exist $r_1\in (0,\frac{d}{2})$ such that
\begin{equation}\label{err_L2Spli}
	\begin{aligned}
		\Vert e_{n+1}\Vert &=\Vert S_h(\delta)e_n+\delta \mathcal{N}(e_n)+\tilde{e}_{n+1}\Vert
		\\
		&=\left\Vert \sum_{k=1}^{n+1}S_h(t_{n+1-k})\tilde{e}_{k}+ \sum_{k=1}^{n}\delta S_h(t_{n-k})\mathcal{N}(e_k)\right\Vert
		\\
		&\leq \left\Vert \sum_{k=1}^{n+1} S_h(t_{n+1-k})\tilde{e}_{k}\right\Vert +  \delta\sum_{k=1}^{n-1}  t_{n-k}^{-r_1/2} \Vert e_k \Vert + \delta^{1-r_1/2} \Vert e_n\Vert.
	\end{aligned}
\end{equation}
To estimate the first term on the right-hand side, it is necessary to describe the local error.

\subsection{Taylor expansion of the local error}
In this subsection, we apply the approach from \cite{luan12stiff} to expand the local error. Upon expansion, if the numerical method satisfies the corresponding order conditions, the local error will contain some higher-order remainder terms. The primary challenge in nonsmooth error analysis lies in accurately estimating these terms. We therefore analyze the expansion process to examine their composition. Like \eqref{Df_def}, the nonlinear term $f_h:X_h|_I\subset X_h\rightarrow X_h$ is Fréchet differentiable of arbitrary order, and satisfies
\begin{equation}\label{Dfh_def}
	D_u^{(m)}f_h(u_h)(v^h_1,v^h_2,...,v^h_m) = P_h(f^{(m)}(u_h)\prod_{i=1}^m v^h_i),\quad m>0\text{, }u_h, v_i^h\in X_h.
\end{equation} 
Expressing the exact solution $u^h(t)$ at time $t_{n+1}$ by the variation-of-constants formula,
$$
\tilde{u}^h_{n+1}=S_h(\delta)\tilde{u}^h_n+\delta\int_{0}^{1}S_h((1-\theta)\delta)g(t_n+\theta \delta)\text{d}\theta, $$
then expanding $g(t_n +\theta \delta)$ in a Taylor series at $t_n$ gives
\begin{equation}\label{Expan_exa}
	\begin{aligned}
		\tilde{u}^h_{n+1}=&\:\tilde{u}^h_n+\delta \varphi_1F(t_n)+\sum_{i=2}^{p}\delta^i \varphi_i g^{(i-1)}(t_n)\\
		&+\delta^{p+1} \int_{0}^{1}S_h((1-\theta)\delta) \frac{\theta^p}{(p-1)!} \int_{0}^{1}(1-s)^{p-1} g^{(p)}(t_n+s \theta \delta)\text{d}s \text{d}\theta.
	\end{aligned}
\end{equation} 

After the expansion of the exact solution $\tilde{u}^h_{n+1}$, we turn to expand the local numerical solution $\hat{u}^h_{n+1}$. It is desired that the numerical scheme preserves the equilibrium. This preservation property can be guaranteed by requiring
\begin{equation}\label{orderCond_1}
	\sum_{j=1}^{i-1}a_{ij}=c_i\varphi_1,\:\: i=1,\ldots,\kappa,\quad\sum_{i=1}^\kappa b_i=\varphi_1.
\end{equation}
Taking into account the above conditions, we reformulate the EERK method \eqref{NumSche1} and consider one step with initial value $\widetilde{u}^h_n$.
\begin{equation}\label{NumSche2}
	\begin{aligned}
		\widehat{U}^h_{ni}&=\tilde{u}^h_{n}+c_{i}\delta\varphi_{1i}F(t_n)+\delta\sum_{j=2}^{i-1}a_{ij}\widehat{D}_{nj},\\
		\hat{u}^h_{n+1}&=\tilde{u}^h_{n}+\delta\varphi_{1}F(t_n)+\delta\sum_{i=2}^{\kappa}b_{i}\widehat{D}_{ni},
	\end{aligned}
\end{equation}
with  
$
\widehat{D}_{ni}=f_h(\widehat{U}^h_{ni})-f_h(\tilde{u}^h_n).$
Expanding $\widehat{D}_{ni}$ in Taylor series at $\widetilde{u}^h_n$, we obtain
\begin{equation}\label{recur1_mid}
	\begin{aligned}
		\widehat{D}_{ni}
		&=\delta \int_0^1 P_h( f'(\tilde{u}^h_n+\theta \delta V_i)V_i) \text{d}\theta \\
		&= \sum_{k=1}^{m}\delta^k\frac{1}{k!}P_h(f^{(k)}(\tilde{u}^h_n)(V_i)^k)+ \delta^{m+1}\int_0^1 \frac{(1-\theta)^m}{m!}P_h(f^{(m+1)}(\tilde{u}^h_n+\theta \delta V_i)(V_i)^{m+1}) \text{d}\theta,
	\end{aligned}
\end{equation}
with
\begin{equation}\label{recur1_mid2}
	V_{i}=\frac{1}{\delta}\left(\widehat{U}^h_{ni}-\tilde{u}^h_{n}\right)=c_{i}\varphi_{1}(c_{i}\delta A_h)F(t_n)+\sum_{j=2}^{i-1}a_{ij}\widehat{D}_{nj}.
\end{equation}
The first term of right hand of above equality can be expanded as follow.
\begin{equation}\label{recur1_end}
	\begin{aligned}
		\varphi_1(c_i \delta A_h)F(t_n) &= \varphi_1(c_i \delta A_h) (\tilde{u}^h_n)'\\
		&= (\tilde{u}^h_n)' + c_i \delta \varphi_2(c_i \delta A_h) ( (\tilde{u}^h_n)''-g'(t_n)) \\
		&= (\tilde{u}^h_n)'+\frac{c_i \delta}{2!}\mathbf{X}_i + c_i^2 \delta^2 \varphi_3(c_i \delta A_h)((\tilde{u}^h_n)^{(3)}-g''(t_n)),
	\end{aligned}
\end{equation}
where $\mathbf{X}_i=(\tilde{u}^h_n)^{\prime\prime}-2!{\varphi}_2(c_i\delta A_h)g'(t_n)$ and the equality can be expanded infinitely. A finite number of nested iterations are performed among \eqref{recur1_mid}, \eqref{recur1_mid2}, and \eqref{recur1_end} (the number of iterations depends on the stage $\kappa$ of scheme \eqref{NumSche2}), and then substituted into \eqref{NumSche2} to obtain the expansion of the numerical solution. Subtracting \eqref{Expan_exa} yields the local error, from which the order conditions of the method can be established (see, e.g., \cite{luan12stiff,luan14stiff}). Below is an example of the expansion with a fifth-order remainder term, where ``order'' denotes the power of the step size $\delta$.
\begin{equation*}
	\begin{aligned}
		\tilde{e}_{n+1} &=\delta^2 \psi_2 g'(t_n)+\delta^3\psi_3 g''(t_n)
		+\delta^4\psi_4 g^{(3)}(t_n)+ \textbf{R}_4(t_n) + \textbf{O}_{5}(t_n)
	\end{aligned}
\end{equation*}
with
\begin{align*}
	\textbf{R}_4(t_n) &=\delta^{3}\sum_{i=2}^{\kappa}b_{i}P_h(f^{\prime}(\tilde{u}^h_{n})\psi_{2,i}g'(t_n))+\delta^{4}\sum_{i=2}^{\kappa}b_{i}P_h(f^{\prime}(\tilde{u}^h_{n})\psi_{3,i}g''(t_n))\\
	&\qquad +\delta^{4}\sum_{i=2}^{\kappa}b_{i}P_h(f^{\prime}(\tilde{u}^h_{n})\sum_{j=2}^{i-1}a_{ij}P_h(f^{\prime}(\tilde{u}^h_{n})\psi_{2,j}g'(t_n)))+\delta^{4}\sum_{i=2}^{\kappa}b_{i}c_{i}P_h(f^{\prime\prime}(\tilde{u}^h_{n})(\tilde{u}^h_{n})^{\prime}\psi_{2,i}g'(t_n)),
\end{align*}
where $\psi_j=\sum_{i=2}^\kappa b_i\frac{c_i^{j-1}}{(j-1)!}-\varphi_j$, $\psi_{ji}=\sum_{k=2}^{i-1}a_{ik}\frac{c_k^{j-1}}{(j-1)!}-c_i^j\varphi_{ji}$.
\begin{definition}
	In general, we say that an EERK method \eqref{NumSche1} is of order $p$ with $p\geq 2$ if it fulfills the order conditions in \cite{luan12stiff,luan14stiff,ERK1} up to order $p-1$ and $\psi_p(0) = 0$. Furthermore, the remaining conditions of order $p$ are satisfied in a weaker form, using $b_i(0)$ in place of $b_i(\delta A_h)$ for $2 \leq i \leq \kappa$.
\end{definition}
For a $p$th order EERK method, its local error expansion is given by:
\begin{equation}\label{Err_localReprens}
	\begin{aligned}
		\tilde{e}_{k+1}&=\delta^p\big(\psi_p(\delta A_h)-\psi_p(0)\big)g^{(p)}(t_k)+\delta^p\sum_{i=2}^\kappa\big(b_i(\delta A_h)-b_i(0)\big)\mathbf{Q}_p(t_{k})+\mathbf{O}_{p+1}(t_{k})\\
		&=:\tilde{e}_{k+1}^{(p)}+\mathbf{O}_{p+1}(t_k).
	\end{aligned}
\end{equation}
Here $\textbf{Q}_p(t_k)$ denotes the terms multiplying $\delta^p\sum_{i=2}^{\kappa}$ in expansion and $\mathbf{O}_{p+1}(t_k)$ is the higher-order remainder.
\begin{remark}\label{challenge_1}
	Conventional error analysis assumes that the derivatives above in the local error expansion are bounded on the underlying space $X$, which may not hold in non-smooth settings. This presents a key challenge: establishing the boundedness of the Fréchet derivatives of $f_h(u_h)$. The boundedness for $f$ was established in Section \ref{sec:AnaFrame} (Assumptions \ref{assum:nf_smooth}, \ref{assum:bound_compos}),  the discrete case requires a further step. In Section \ref{sec:ErrEst_KeyEst}, we combine with Lemma \ref{le:spaDis_property} to generalize the result, 
	which is simplified to Assumption \ref{ass:Frech_dominant}.
\end{remark}

\begin{remark}\label{challenge_2}
	Estimating the local error \eqref{Err_localReprens} faces another challenge:   determining the composition of $\textbf{Q}_p(t_k)$ and $\textbf{O}_{p+1}(t_k)$ in the remainder term, this is the task of the present subsection. Observing the expansion example $\textbf{R}_4(t_n)$ above, the expansion terms consist of step sizes, derivatives of $u^h(t_n)$ and $f_h(u_h)$, and exponential functions. The uniformly bounded exponential operators can be absorbed into the generic constants. A connection exists between the powers of the step size and the orders of derivatives in the terms, enabling an equivalent estimate. Finally, these estimates exhibit a singularity at $t=0$, which cancels out the corresponding powers of the step size. This results in a reduction of the method's order. 
\end{remark}

The expansion \eqref{Err_localReprens} leaves some higher-order terms, which satisfy the following pattern: 
\begin{enumerate}
	\item Based on the expected order of the method, we can control the degree of expansion in \eqref{Expan_exa}, \eqref{recur1_mid}, and \eqref{recur1_end}; for instance, in the error expansion of a $p$th-order method, the highest derivatives involved are $f^{(p)}(u_h)$ and $D_t^p u^h(t)$.
	\item Since the estimates for $S_h(\delta)$ and $\varphi_i(\delta A_h)$ are identical under the $\Vert A_h^{s/2}\cdot\Vert$ norm $(-2<s<2)$, we can equivalently substitute the latter with the former. The same applies to the exponential functions generated by $\varphi_i$.
	\item The boundedness of $f^{(m+1)}(\tilde{u}^h_n+\theta \delta V_i)$ is identical to that of $f^{(m+1)}(\tilde{u}^h_n)$, and the integral of the remaining terms in \eqref{recur1_mid} is bounded; thus, the former can be replaced by the latter.
\end{enumerate}
Below is an example of an equivalent substitution.
\begin{equation*}
\begin{aligned}
 &\delta^{4}\sum_{i=2}^{\kappa}b_{i}c_{i}\int_0^1 (1-\theta)P_h(f^{(2)}(\tilde{u}^h_n+\theta \delta V_i)(\tilde{u}^h_{n})^{\prime}\psi_{2,i}g'(t_n))\text{d}\theta \\
 \sim\  & 
\delta^{4}S_h(\delta)P_h(f^{(2)}(\tilde{u}^h_{n})(\tilde{u}^h_{n})^{\prime}S_h(\delta)g'(t_n)) 
\end{aligned}
\end{equation*}
It remains to estimate the derivative term on the right-hand side. Clearly, the orders of $f^{(i)}(\tilde{u}^h)$ and $(\tilde{u}^h_{n})^{(j)}$ exhibit a specific pattern determined by the powers of $\delta$. To characterize this pattern, we introduce a family of function sets, $\mathcal{K}_m^p$, where the index $m$ indicates the power of $\delta^m$ that pairs with the corresponding derivative. Denote $\mathcal{K}^{p}_1=\{D_t u^h(t)\}$ and $\mathcal{K}'=\{ D_t^i u^h(t) \:|\: i\geq 1\}$. 
The set $\mathcal{K}_{m+1}^p$ are constructed in a recursive way. For $1\leq m\leq p-1$, 
\begin{equation*}
	\begin{aligned}
		\mathcal{K}^{{p}}_{m+1}&=\{D_t^{m+1} u^h(t)\}\cup \{ S_h(\delta) P_h (f^{(j)}(u^h(t))\Pi_{i=1}^j v^h_i(t)) \big|\\
		&\qquad 1\leq j\leq {p},\: v^h_{i}(t)\in \mathcal{K}_{k_i}^{p},\:\sum_{i=1}^{j}k_i=m\}.
	\end{aligned}
\end{equation*}
For $m\geq {p}$, 
\begin{equation*}
	\begin{aligned}
		\mathcal{K}^{p}_{m+1}= \{ S_h(\delta) P_h(f^{(j)}(u^h(t))\Pi_{i=1}^j v^h_i(t)) \big|
		\:1\leq j\leq p,\: v^h_{i}(t)\in \mathcal{K}_{k_i}^{p},\:\sum_{i=1}^{j}k_i=m
		\}.
	\end{aligned}
\end{equation*}
Define $\text{span}(\mathcal{K}_m^p)$ as the vector space generated by the function set $\mathcal{K}_m^p$. Then there exist $V_i(t)\in \text{span}(\mathcal{K}_i^p)$ for $i\geq p$, such that $\textbf{Q}_p(t_k)$ and  $\mathbf{O}_{p+1}(t_k)$  can be equivalently represented by $V_p(t_k)$ and $\delta^{p+1}V_{p+1}(t_k)+\delta^{p+2}V_{p+2}(t_k)+...$, respectively.

\section{Error estimate}\label{sec:ErrEst}
In the previous section, the time discretization error was separated into two parts, resulting in \eqref{err_L2Spli}. It remains to estimate the local error \eqref{Err_localReprens}, which will be referred to as a $p$th-order expansion. For higher-order expansions, the analysis is complicated by the involvement of higher-order Fréchet derivatives of $f_h$. Therefore, we restrict our presentation to the second- and third-order cases and set $\hat{p}=2$ or $3$. The procedure for higher orders is analogous. Note that higher-order methods inherently satisfy lower-order conditions, thus the corresponding expansions remain valid, and the final error estimates are still applicable. For a $\hat{p}$th-order expansion, the terms $\textbf{Q}_{\hat{p}}(t_k)$ and $\textbf{O}_{\hat{p}+1}(t_k)$ can be equivalently represented by a linear combination of the elements in $\mathcal{K}^{\hat{p}}_{m}$. To estimate the equivalent terms in $\mathcal{K}_m^{\hat{p}}$, it is necessary to analyze the boundedness of the Fréchet derivative of $f_h$. Following this derivation, the result is simplified to Assumption \ref{ass:Frech_dominant}. Then the estimates concerning the equivalent terms are provided in Proposition \ref{pro:Km1}. Substituting these into \eqref{err_L2Spli} yields the temporal error in Theorem \ref{theo:tempErr1}.

\subsection{Boundedness of the Fréchet derivatives of $f_h(u^h(t))$}\label{sec:ErrEst_KeyEst}

Combining Lemma \ref{le:multi} with \eqref{Auh} and \eqref{AhPh} yields its discrete counterpart.
\begin{align}\label{mult_Xh}
	\Vert A_h^{s/2}P_h(v_1 v^h_2) \Vert\leq C\Vert A^{s_1/2} v_1\Vert \Vert A_h^{s_2/2}v^h_2\Vert, 
\end{align}
where $(s,s_1,s_2)\in \mathcal{B}$. Note that if $u$ and $v$ satisfy homogeneous Neumann conditions, so does their product $uv$. Moreover, when $s>\frac{3}{2}$, the fractional Sobolev space is an algebra. Consequently, the parameter range $\mathcal{B}$ in Lemma \ref{le:multi} can be extended. For the discrete version $P_h(f'(u^h(t))v^h)$, the numerical solution $u^h(t)$ lacks sufficient regularity for a direct estimation of $f'(u^h(t))$ in the norm $\Vert A^{s_1/2}\cdot \Vert$. To overcome this, one may substitute $f'(u^h(t))$ with $f'(u(t))$, and the resulting error can be controlled by applying the inverse inequality \eqref{Ah_inver}.

\begin{lemma}\label{le:mult_algebra}
	For $\frac{3}{{2}}<r\leq 2$, $v_1\in L^2$ and $v^h_2\in X_h$, if there exist $w_1\in D(A^{r/2})$ such that $\Vert w_1-v_1 \Vert_\varepsilon \leq C h^{r-\varepsilon}$, then the following estimates hold.
	\begin{align}
		\Vert A_h^{(r-\varepsilon)/2}P_h(v_1 v^h_2) \Vert&\leq C(\Vert A^{r/2} w_1\Vert+1) \Vert A_h^{r/2}v^h_2\Vert, \label{mult_algebra}
		\\
		\Vert A_h^{-r/2}P_h(v_1 v^h_2) \Vert &\leq C(\Vert A^{r/2} w_1\Vert+1) \Vert A_h^{(-r+\varepsilon)/2}v^h_2\Vert.
		\label{mult_algebra_dual}
	\end{align}
\end{lemma}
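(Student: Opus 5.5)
The plan is to split $v_1 = w_1 + (v_1 - w_1)$ and treat the two pieces differently. The smooth piece $w_1\in D(A^{r/2})$, $r>3/2$, is handled through the algebra property of $H^r$. The rough remainder $v_1-w_1$, which is only small in $H^\varepsilon$, is handled with the binary estimate \eqref{mult_Xh} combined with the inverse inequality \eqref{Ah_inver}, so that the factor $h^{r-\varepsilon}$ absorbs the powers of $h^{-1}$.

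For \eqref{mult_algebra}, the smooth part goes as follows. Since $r-\varepsilon<3/2$ for $\varepsilon$ close to $r-3/2$ (or else we first use $\|A_h^{(r-\varepsilon)/2}P_h\cdot\|$ with the algebra property directly), the route is to bound $\|A_h^{(r-\varepsilon)/2}P_h(w_1v_2^h)\|$.
\begin{itemize}
\item First use \eqref{AhPh} to reduce this to $\|A^{(r-\varepsilon)/2}(w_1v_2^h)\|$. This needs $(r-\varepsilon)/2\le 1$ and Neumann compatibility, which holds because $r-\varepsilon$ can be taken below $3/2$, or because products of Neumann functions remain Neumann.
\item Next use the algebra property of $H^{r-\varepsilon}$ with $r-\varepsilon>d/2$, together with $\|v_2^h\|_{r-\varepsilon}\le C\|A_h^{r/2}v_2^h\|$. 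The latter follows from \eqref{Auh}/\eqref{NormEq2}, since $v_2^h\in X_h\subset H^{3/2-\varepsilon}$.
\end{itemize}
The subtle point is that piecewise linears lie only in $H^{s}$ for $s<3/2$. I will therefore choose $\varepsilon$ so that $r-\varepsilon<3/2$ whenever needed, and otherwise interpolate.

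The rough part uses the inverse estimate \eqref{Ah_inver}, $\|A_h^{(r-\varepsilon)/2}P_h z\|\le Ch^{-(r-\varepsilon)}\|P_h z\|$. For $z=(v_1-w_1)v_2^h$, take $(0,\varepsilon,s_2)\in\mathcal B$ with $s_2>d/2-\varepsilon$ and use \eqref{mult_Xh} to get $\|P_h z\|\le C\|v_1-w_1\|_\varepsilon\|A_h^{s_2/2}v_2^h\|\le Ch^{r-\varepsilon}\|A_h^{r/2}v_2^h\|$. The $h$-powers cancel exactly, which yields the constant "$+1$" term.

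The dual estimate \eqref{mult_algebra_dual} will come from duality. We write $\|A_h^{-r/2}P_h(v_1v_2^h)\|=\sup_{\phi_h}(v_1v_2^h,\phi_h)/\|A_h^{r/2}\phi_h\|$ and move $v_1$ onto $\phi_h$, obtaining $(v_2^h,P_h(v_1\phi_h))\le\|A_h^{(-r+\varepsilon)/2}v_2^h\|\,\|A_h^{(r-\varepsilon)/2}P_h(v_1\phi_h)\|$. Applying \eqref{mult_algebra} to the second factor gives the result. The main obstacle is the smooth-part step: justifying the $H^{r-\varepsilon}$ algebra bound in the discrete norms with $r>3/2$, where \eqref{Auh} and \eqref{AhPh} are only valid up to $3/4$ and the Neumann spaces enter. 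I would first try taking $\varepsilon$ slightly larger than $r-3/2$ so that everything stays below $3/2$, and then handle the range near $2$ via the Ritz projection bounds from Lemma \ref{le:spaDis_property}.
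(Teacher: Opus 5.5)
Your handling of the rough piece $(v_1-w_1)v_2^h$ is correct: \eqref{Ah_inver} combined with \eqref{mult_Xh} for $(0,\varepsilon,s_2)\in\mathcal B$, $d/2-\varepsilon<s_2<3/2$, works. Your duality argument for \eqref{mult_algebra_dual} is also correct. The gap is the smooth piece $w_1v_2^h$, which carries the whole content of the lemma.

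The lemma only says something beyond \eqref{mult_Xh} when $r-\varepsilon>3/2$. That is also the range in which it is used, to prove \eqref{frech_first} for $\gamma>3/2$ with $s$ near $\rho_1(\gamma)=\gamma$. In that range your smooth-part step fails:
\begin{itemize}
\item A continuous piecewise linear $v_2^h$ with kinks is not in $H^{r-\varepsilon}$. So $\|v_2^h\|_{r-\varepsilon}\le C\|A_h^{r/2}v_2^h\|$ is false, since the left side is infinite.
\item For the same reason $w_1v_2^h\notin H^{r-\varepsilon}_N=D(A^{(r-\varepsilon)/2})$, so \eqref{AhPh} cannot be applied to it.
\item \eqref{Auh} and \eqref{NormEq2} only reach exponent $3/4$.
\end{itemize}
Taking $\varepsilon>r-3/2$ is no way out. $\varepsilon$ is a small parameter (for $r=2$ you would need $\varepsilon>1/2$), and the resulting estimate with $r-\varepsilon<3/2$ is just \eqref{mult_Xh} again. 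Your remark about handling ``the range near $2$ via the Ritz projection bounds'' points the right way, but it omits the construction that makes this work.

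The missing idea is to give $v_2^h$ a smooth continuous counterpart whose Ritz projection is $v_2^h$. Set $w_2:=A^{-1}A_hv_2^h\in D(A)$. For all $\chi\in X_h$ we have $a(w_2,\chi)=(-A_hv_2^h,\chi)=a(v_2^h,\chi)$, so $R_hw_2=v_2^h$. By \eqref{Auh} with $\theta=r/2-1\in(-1/4,0]$,
\[
\|A^{r/2}w_2\|=\|A^{r/2-1}A_hv_2^h\|\le C\|A_h^{r/2}v_2^h\| .
\]
Now split $w_1v_2^h=w_1w_2+w_1(R_h-1)w_2$.
\begin{itemize}
\item The product $w_1w_2$ of two $H^r_N$ functions lies in $H^r_N$, since $H^r$ is an algebra for $r>d/2$ and the Neumann condition is preserved. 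Hence \eqref{AhPh} with $\theta=(r-\varepsilon)/2\le 1$ bounds this term by $C\|A^{r/2}w_1\|\,\|A^{r/2}w_2\|$.
\item The remainder is treated like your rough piece. Using \eqref{Ah_inver}, $\|w_1\|_{L^\infty}\le C\|A^{r/2}w_1\|$, and \eqref{I_Rh} with $s=0$, it is bounded by $Ch^{\varepsilon-r}\,h^{r}\,\|A^{r/2}w_1\|\,\|A^{r/2}w_2\|$.
\end{itemize}
This is the paper's route. The paper also pairs $v_1-w_1$ separately with $w_2$ and with $v_2^h-w_2$; your single inverse-inequality step for $(v_1-w_1)v_2^h$ is a slightly cleaner alternative for that part.
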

\begin{proof}
	The function $w_1$ serves as the smooth counterpart of $v_1$, and we now seek to construct an analogous smooth counterpart for $v_2^h$. To this end, we select $w_2\in D(A)$ such that $A_h v_2^h=A w_2$, which implies that $R_h w_2=v_2^h$. By \eqref{Auh} and the definition of $w_2$, we obtain the estimate
	$$
	\Vert A^{r/2} w_2\Vert \leq \Vert A^{r/2-1} A_h v_2^h \Vert \leq \Vert A_h^{r/2} v_2^h \Vert.
	$$ 
	Combining Lemma \ref{le:multi} and \ref{le:spaDis_property}, it holds that 
	\begin{align*}
		\Vert A_h^{(r-\varepsilon)/2}P_h(v_1 v^h_2) \Vert &\leq \Vert A_h^{(r-\varepsilon)/2}P_h[(v_1-w_1+w_1)(v^h_2-w_2+w_2)] \Vert
		\\
		&\leq \Vert A_h^{(r-\varepsilon)/2}P_h((v_1-w_1)(v^h_2-w_2))\Vert +\Vert A_h^{(r-\varepsilon)/2}P_h((v_1-w_1)w_2)\Vert
		\\
		&\quad +\Vert A_h^{(r-\varepsilon)/2}P_h (w_1(v_2^h-w_2))\Vert+\Vert A_h^{(r-\varepsilon)/2}P_h (w_1w_2) \Vert
		\\
		&\leq C h^{\varepsilon-r}\Vert v_1-w_1 \Vert_{\varepsilon} \Vert (1-R_h) w_2\Vert_{d/2-\varepsilon} +  
		C h^{\varepsilon-r}\Vert v_1-w_1\Vert \Vert A^{r/2} w_2\Vert 
		\\
		&\quad +C h^{\varepsilon-r} \Vert A^{r/2} w_1 \Vert \Vert (1-R_h)w_2 \Vert + C \Vert A^{r/2} w_1 \Vert  \Vert A^{r/2} w_2 \Vert
		\\
		&\leq C h^{\varepsilon-r+r-\varepsilon+r-d/2+\varepsilon} \Vert A^{r/2}w_2 \Vert + C h^{\varepsilon-r+r-\varepsilon} \Vert A^{r/2} w_2\Vert 
		\\
		&\quad+ C h^{\varepsilon-r+r} \Vert A^{r/2} w_1 \Vert \Vert A^{r/2} w_2 \Vert  + C\Vert 
		A^{r/2} w_1 \Vert \Vert A^{r/2} w_2 \Vert
		\\
		&\leq  C(\Vert A^{r/2} w_1\Vert+1) \Vert A_h^{r/2}v^h_2\Vert.
	\end{align*}
	Thus, \eqref{mult_algebra} is proved, and \eqref{mult_algebra_dual} follows by duality argument from \eqref{mult_algebra}.
	\begin{align*}
		&\quad \Vert A_h^{-r/2} P_h (v_1 v_2^h)\Vert = \sup_{w^h\in X_h} \frac{|(P_h(v_1 v_2^h), w^h)|}{\Vert A^{r/2} w^h \Vert}  =\sup_{w^h\in X_h} \frac{|(v_2^h,P_h(v_1 w^h))|}{\Vert A_h^{r/2} w^h \Vert} \\
		&\leq \sup_{w^h\in X_h} \frac{\Vert A^{(\varepsilon-r)/2}v_2^h \Vert \Vert A^{(r-\varepsilon)/2}(v_1 w^h)\Vert}{\Vert A^{r/2} w^h \Vert} 
		\leq (\Vert A^{r/2}w_1\Vert+1) \Vert A^{(\varepsilon-r)/2}v_2^h\Vert.
	\end{align*}
	This completes the proof.
\end{proof}
Similarly, we can extend the above binary pointwise multiplication estimates to the m-ary case, as shown below.
\begin{align}
	\Vert A_h^{(r-\varepsilon)/2}P_h(v_1 v^h_2 ... v_m^h) \Vert & \leq C(\Vert A^{r/2} w_1\Vert+1) \Vert A_h^{r/2}v^h_2\Vert ... \Vert A_h^{r/2}v^h_m\Vert.
	\label{mult_Xh_m1}
	\\
	\Vert A_h^{-r/2}P_h(v_1 v^h_2 ... v_m^h) \Vert  \leq C(\Vert &A^{r/2} w_1\Vert+1) \Vert A_h^{r/2}v^h_2\Vert ... \Vert A_h^{r/2}v^h_{m-1}\Vert \Vert A_h^{(-r+\varepsilon)/2}v^h_m\Vert.\label{mult_Xh_m2}
\end{align}
where $v_i^h\in X_h$ for $i=2,3,..,m.$

We now proceed to estimate the Fréchet derivatives of $f_h(u^h)$. Define a shift for $s$: 
\begin{equation}\label{eq:sigma}
	\sigma(s) = \max(s+d/2-\rho_1(\gamma),-\rho_1(\gamma),s)+\varepsilon, \quad -2\leq s<\rho_1(\gamma).
\end{equation}
Here, the inequality $\sigma(s) - s < 2$ holds, which will be frequently employed in subsequent arguments. We want to show that
\begin{align}\label{frech_first}
	\Vert A_h^{s/2}P_h (f'(u^h(t))v^h) \Vert\leq C  \Vert A_h^{\sigma(s)/2}v^h \Vert, \qquad -2\leq s<\rho_1(\gamma).
\end{align} 
For the case $|s|<{3}/{2}$ and $\gamma<3/2$ where $(s,\rho_1(\gamma),\sigma(s))\in \mathcal{B}$, estimate \eqref{frech_first} follows directly from \eqref{bound_uh} and \eqref{mult_Xh}. Note that through norm embedding, estimate \eqref{mult_Xh} remains valid for $(s-c_1, \rho_1(\gamma)+c_2, \sigma(s)+c_3)$ even outside the set $\mathcal{B}$ for any $c_1,c_2,c_3>0$. We will omit this special case in what follows. When $|s|>3/2$ and $\gamma>3/2$,  we have $\rho_1(\gamma) = \rho_2(\gamma) = \gamma>3/2$. Let $w_1=f'(u(t))$ and apply Lemma \ref{le:multi} with $(\varepsilon,d/2-\varepsilon,\varepsilon)\in \mathcal{B}$,   combined with \eqref{SD_G_2}, to derive
$$ 
\Vert f'(u^h(t))-f'(u(t))\Vert_\varepsilon\leq C \Vert A^{(d/2-\varepsilon)/2}f''(u^h(t)+u(t))\Vert \Vert u(t)-u^h(t)\Vert_{\varepsilon} \leq C h^{\gamma-\varepsilon}.
$$
This, together with the boundedness of
$\Vert A^{\rho_1(\gamma)/2}f'(u(t))\Vert$ and Lemma \ref{le:mult_algebra}, yields \eqref{frech_first}. It implies that 
\begin{equation}\label{bound_g}
	\Vert A_h^{s/2} f_h(u^h(t))\Vert\leq \Vert A_h^{s/2} P_h(f'(u^h(t))u^h(t))\Vert+C\leq C(t^{\gamma/2-\sigma(s)/2}+1).
\end{equation}

For the estimation of the second- and third-order Fréchet derivatives, involving the parameters $\rho_2(\gamma)$ and $\rho_3(\gamma)$, the derivation is quite tedious. To simplify the presentation, we extract some common features and make the following assumptions.

\begin{assumption}\label{ass:Frech_dominant}
	For $-2\leq s<\rho_1(\gamma)$, the following estimates hold
	\begin{align}
		\Vert A_h^{s/2}P_h (f^{(2)}(u^h(t))v_1^h v_2^h) \Vert &\leq C  \Vert A_h^{s_{21}/2}v^h_1 \Vert \Vert A_h^{s_{22}/2}v^h_2 \Vert, \label{frech_second}
		\\
		\Vert A_h^{s/2}P_h (f^{(3)}(u^h(t))v^h_1 v^h_2 v^h_3) \Vert &\leq C  \Vert A_h^{s_{31}/2}v^h_1 \Vert \Vert A_h^{s_{32}/2}v^h_2 \Vert \Vert A_h^{s_{33}/2}v^h_3 \Vert, \label{frech_third}
	\end{align}
	where $s_{21}$, $s_{22}$, $s_{31}$, $s_{32}$, $s_{33}\geq s$, $s_{21}+s_{22}\leq \gamma + \sigma(s)$,  $s_{31}+s_{32}+s_{33}\leq 2\gamma + \sigma(s)$.
\end{assumption}
Dominance of the first-order derivative is guaranteed by this assumption. Specifically, when estimate \eqref{AhDtuh_1} in Proposition \ref{pro:AhDtuh_1} holds, we can consequently derive the validity of \eqref{AhDtg}. In what follows, illustrative examples are provided to validate the assumption.

\begin{example}\label{example:d2case}
	$f(\tau)$ satisfies  Assumptions \ref{assum:nf_smooth}-\ref{assum:bound_compos} and $\frac{d}{2}<\gamma\leq 2$. According to the Assumption \ref{assum:bound_compos} and the bounds \eqref{bound_u}, \eqref{bound_uh}, when $\gamma>\frac{d}{2}$, we have
	$$
	\Vert A^{\hat{\gamma}/2} f^{(m)}(u^h(t))\Vert + \Vert A^{\gamma/2} f^{(m)}(u(t))\Vert\leq C.$$ 
	Observing the set $\mathcal{B}$, when $s_1$ approaches or exceeds $\frac{d}{2}$, in order to make $s_2$ as small as possible, $s_2$ is a slight shift of $s$, with the constraint that $s_1 + s_2 > 0$. This case can be generalized. When $\frac{d}{2}<\gamma<\frac{3}{2}$, by progressively employing Lemma \ref{le:multi} with
	$(s,\gamma,\sigma(s))$, $(\sigma(s),\gamma,\sigma(s))$,...,$(\sigma(s),\gamma,\sigma(s))\in \mathcal{B}$, and applying \eqref{Auh} and \eqref{AhPh}, we get the following estimate for derivatives \eqref{Dfh_def}.
	\begin{align}\label{mult_d2case}
		\Vert A_h^{s/2}P_h(f^{(m)}(u^h(t)) v^h_1...v^h_m)\Vert\leq C\Vert A_h^{\gamma/2}v^h_1\Vert \Vert A_h^{\gamma/2}v^h_2\Vert...\Vert A_h^{\sigma(s)/2}v^h_m\Vert,
	\end{align}
	where $v^h_i\in V_h$, $i=1,2,...,m$ and $-2<s<\gamma$. This implies 
	$$s_{21}+s_{22} = \gamma + \sigma(s,\gamma), \quad s_{31}+s_{32} + s_{33} = 2\gamma + \sigma(s,\gamma).$$ 
	When $3/2<\gamma\leq 2$, the same conclusion holds by applying estimates \eqref{mult_Xh_m1} and \eqref{mult_Xh_m2}.
\end{example}

\begin{example}  \label{example:less_d2case}
	Consider the case where $\beta_1<\gamma<\frac{d}{2}$, and follow the analysis in Examples \ref{example:poly} and \ref{example:ration}. According to Assumption \ref{assum:bound_compos}, we only know $\Vert f^{(m)}(u^h(t)) \Vert_{\rho_m(\gamma)}\leq C$ for the nonlinear term. To estimate the Fréchet derivative \eqref{Dfh_def}, we define the following set, as established in \cite[Theorem 4.5.1]{SobFracNEMy}.
	\begin{equation}\label{Bm}
		\begin{aligned}
			\mathcal{B}_{m}= \left\{(s,s_1,s_2,...,s_m) \:\big|\: 0<s_i<\frac{d}{2},\: -\frac{d}{2}\leq s<s_i,\: i=1,2,...,m,\: \frac{d}{2}-s>\sum_{j=1}^{m}(\frac{d}{2}-s_j) \right\}.
		\end{aligned}
	\end{equation}
	Then for any $v^h_i\in X_h$, $i=1,2,...,m$ with $(s,\rho_m(\gamma),s_{m1},s_{m2},...,s_{mm})\in \mathcal{B}_{m+1}$, we have 
	\begin{align}\label{multi_Xh_m}
		\Vert A_h^{s/2}P_h(f^{(m)}(u^h(t))\prod_{i=1}^m v^h_i)\Vert\leq C\Vert f^{(m)}(u^h(t))\Vert_{\rho_m(\gamma)} \Vert A_h^{s_{m1}/2}v^h_1\Vert...\Vert A_h^{s_{mm}/2}v^h_m\Vert.
	\end{align}
	The proof of the above estimate is similar to \cite[Lemma 1]{ERK_FNM}. When $f(\tau)=\tau^m$, $m\geq 2$, we have $\rho_k(\gamma)=\gamma-(m-k-1)(\frac{d}{2}-\gamma)$. We derive the estimates \eqref{frech_second} and \eqref{frech_third} by applying \eqref{multi_Xh_m} with $(s,\rho_2(\gamma), s_{21}, s_{22})\in \mathcal{B}_3$ and $(s,\rho_3(\gamma), s_{31}, s_{32}, s_{33})\in \mathcal{B}_4$, respectively. It can be verified that both conditions $s_{21}+s_{22}\leq \gamma + \sigma(s)$,  $s_{31}+s_{32}+s_{33}\leq 2\gamma + \sigma(s)$ hold. When $f(\tau)=\tau^4/(1+\tau^2)$ and $d=2$, we obtain $\rho_1(\gamma)=\rho_2(\gamma)=\gamma$. 
	After using estimate \eqref{multi_Xh_m}, the constraint $s_{21}+s_{22}\leq \gamma + \sigma(s)$ does not hold. The polynomial case exactly satisfies the required conditions due to a $d/2 - \gamma$ differential dimensional decrease in nonlinear terms after differentiation. Note that $\Vert f''(u^h(t))\Vert_\infty$ is uniform bound. Its differential dimension ($0-d/\infty$) is associated with $H^{d/2}$ ($d/2-d/2$). This behavior is analogous to that observed in polynomial cases. We now proceed with this perspective. Based on \cite[Chapter 4.5]{SobFracNEMy}, it follows that 
	$$ 
	\Vert f^{(2)}(u^h(t))v_1^h v_2^h \Vert \leq C  \Vert f''(u^h(t))\Vert_\infty \Vert A_h^{s'/2}v^h_1 \Vert \Vert A_h^{-s/2}v^h_2 \Vert,   $$
	where $s'=1+s+\varepsilon$, $s<0$, $s'>0$. By duality argument, we get
	$$ 
	\Vert A_h^{{s}/2}P_h (f^{(2)}(u^h(t))v_1^h v_2^h) \Vert \leq C  \Vert f''(u^h(t))\Vert_\infty \Vert A_h^{s'/2}v^h_1 \Vert \Vert v^h_2 \Vert,   $$
	for $-2\leq s< 0$. This implies $s_{21}+s_{22}=s'+0\leq \gamma+\sigma(s)$. 
\end{example}

\subsection{Estimation for $\mathcal{K}_m^{\hat{p}}$}
Having validated the Assumption \ref{ass:Frech_dominant} through some examples, we now turn to estimate the equivalent terms in $\mathcal{K}_m^{\hat{p}}$. We begin by analyzing the derivatives of $u^h(t)$. 
\begin{proposition}\label{pro:AhDtuh_1}
	Let Assumptions \ref{assum:nf_smooth}-\ref{ass:Frech_dominant} be fulfilled. If $u^h(t)$ is the solution of \eqref{eq2}, then $u^h(t)\in C^m((0,T]; X_h)$ and the following estimates hold for $m=0,1,2,3$:
	\begin{align}
		\Vert A_h^{s/2} D_t^m u^h(t)\Vert&\leq Ct^{-m+{\gamma/2}-s/2}+C,  &&-2\leq s\leq 2, \label{AhDtuh_1}&
		\\
		\Vert A_h^{s/2} D_t^m g(t)\Vert&\leq Ct^{-m+{\gamma/2}-\sigma(s)/2}+C, && -2\leq s < \rho_1(\gamma).\label{AhDtg}&
	\end{align}
	
\end{proposition}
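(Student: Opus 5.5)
We argue by induction on $m$, establishing \eqref{AhDtuh_1} and \eqref{AhDtg} together at each level. For $m=0$, \eqref{AhDtuh_1} is \eqref{bound_uh}, and \eqref{AhDtg} is \eqref{bound_g}. Negative exponents $-2\le s<0$ in \eqref{AhDtuh_1} follow from the case $s=0$ and the boundedness of $A_h^{s/2}$.

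The key tool for higher $m$ is a time-shifted variation-of-constants representation. Fix $t$ and write, for $t/2\le \tau\le t$,
\[
u^h(\tau)=S_h(\tau-t/2)u^h(t/2)+\int_{t/2}^{\tau}S_h(\tau-r)g(r)\,dr .
\]
We can also use the equivalent form obtained after the substitution $r\mapsto r-t/2$, which moves the time derivatives onto $g$. Differentiating $m$ times (rigorously, by Picard iteration/fixed point in the weighted space $\sup_t t^{m-\gamma/2+s/2}\|A_h^{s/2}\cdot\|$) gives
\[
D_t^m u^h(t)=A_h^m S_h(t/2)u^h(t/2)+\sum_{k=0}^{m-1}A_h^{m-1-k}S_h(t/2)\,g^{(k)}(t/2)+\int_{t/2}^{t}S_h(t-r)g^{(m)}(r)\,dr .
\]
By \eqref{bound_uh}, the first term is bounded by $Ct^{-m+\gamma/2-s/2}$. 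For the sum, the inductive hypothesis \eqref{AhDtg} applied with some $s'<\rho_1(\gamma)$ gives a bound $t^{-(m-1-k)-(s-s')/2}t^{-k+\gamma/2-\sigma(s')/2}$. Choosing $s'$ with $\sigma(s')\le s'+2-\varepsilon$, which is possible since $\sigma(s)-s<2$, yields at most $t^{-m+\gamma/2-s/2}$ up to an $\varepsilon$ that we absorb by choosing $s'$ carefully. Since the $\varepsilon$-loss in $\sigma$ must not accumulate, we select $s'$ strictly below the threshold.

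The integral term contains $g^{(m)}$, which itself depends on $D_t^m u^h$. We handle it by expressing $g^{(m)}$ through the Faà di Bruno formula:
\[
D_t^m g=P_h\big(f'(u^h)D_t^m u^h\big)+\sum P_h\big(f^{(j)}(u^h)\textstyle\prod D_t^{k_i}u^h\big),\qquad j\ge2,\ \sum k_i=m,\ k_i<m .
\]
The lower-order products are estimated by Assumption \ref{ass:Frech_dominant} with exponents $s_{21}+s_{22}\le\gamma+\sigma(s)$ (and similarly for $j=3$), together with the inductive bounds \eqref{AhDtuh_1}. Distributing the exponents gives the power $t^{-\sum k_i+\gamma-(s_{21}+s_{22})/2}\le t^{-m+\gamma/2-\sigma(s)/2}$. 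This is exactly where the sum condition in Assumption \ref{ass:Frech_dominant} is tailored: the extra $\gamma$ compensates for having two factors each carrying $t^{\gamma/2}$. The first-order term is bounded by \eqref{frech_first} as $C\|A_h^{\sigma(s)/2}D_t^m u^h\|$. Hence \eqref{AhDtg} at level $m$ follows once \eqref{AhDtuh_1} at level $m$ is known for exponent $\sigma(s)\le 2$. If $\sigma(s)>2$ is ever needed, we restrict to admissible $s$ and use monotonicity.

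The main obstacle is closing the loop, since the integral $\int_{t/2}^t S_h(t-r)g^{(m)}(r)\,dr$ involves $D_t^m u^h$ through the $f'$ term. We plan to resolve this with a Gronwall-type argument. Set $\Phi(t)=\sup_{\tau\le t}\tau^{m-\gamma/2+\mu/2}\|A_h^{\mu/2}D_t^m u^h(\tau)\|$ with $\mu=\hat\gamma$ or $2-\varepsilon$. Using \eqref{frech_first} with $s=\mu-2+\varepsilon'$, where $\sigma(s)\le\mu$ as in Proposition \ref{pro:stabi}, we get
\[
\int_{t/2}^{t}(t-r)^{-(\mu-s)/2}\,\|A_h^{s/2}P_h(f'D_t^m u^h)\|\,dr\ \lesssim\ \int_{t/2}^{t}(t-r)^{-1+\varepsilon'/2}\,r^{-m+\gamma/2-\mu/2}\,\Phi(r)\,dr .
\]
A weakly singular Gronwall lemma then bounds $\Phi$. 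After that, we bootstrap to the full range $-2\le s\le2$: the case $s=2$ follows by writing $A_hD_t^mu^h=D_t^{m+1}u^h-D_t^m g$ is circular, so instead we use the smoothing of the first two terms and the Hölder-type estimate of $g^{(m)}$ in the integral in the standard analytic-semigroup way, accepting an $\varepsilon$-loss that is removed by splitting $[t/2,t]$. Finally, \eqref{AhDtg} follows for all $-2\le s<\rho_1(\gamma)$ by the Faà di Bruno estimate above, which completes the induction up to $m=3$.
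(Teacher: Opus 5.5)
Your proof works in outline, but it takes a different route from the paper's.

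\textbf{How the paper avoids the circularity.} At level $l$ the paper first treats $-2\le s\le 0$ directly from the equation,
\[
D_t^l u^h = A_h D_t^{l-1}u^h + D_t^{l-1}g .
\]
It reads off $\|A_h^{s/2}D_t^lu^h\|$ from the level-$(l-1)$ bound at spatial index $s+2\in[0,2]$, together with $\sigma(s)-s<2$. This is why the induction hypothesis is carried over the full range $-2\le s\le 2$. For $s>0$ the paper writes the variation-of-constants formula for the weighted function $V(t)=t^lD_t^lu^h(t)$, which vanishes at $t=0$. There $D_\tau^lf_h$ is measured in the negative norm $\|A_h^{-d/4+\varepsilon'}\cdot\|$, so only the already known $L^2$ bound of $D_\tau^lu^h$ enters. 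The range of $s$ is then pushed up to $2$ in finitely many steps. No Gronwall argument is needed.

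\textbf{What your route buys.} You use the formula from $t/2$ differentiated $m$ times, plus a weakly singular Gronwall inequality for the weighted sup $\Phi$ at a single $\mu$ near $2$. This reaches the high index in one stroke and avoids the incremental bootstrap. The price is a Gronwall lemma and an a priori finiteness of $\Phi$, which you do not state. It holds because $X_h$ is finite-dimensional and $u^h$ is smooth on $[0,T]$, so $\Phi<\infty$ for each fixed $h$, while the Gronwall output is $h$-independent. You could also drop Gronwall by borrowing the paper's first step: the equation gives the level-$m$ bound for $s\le 0$ directly, and your integral term can then be bootstrapped upward in $s$ as the paper does.

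\textbf{Endpoint $s=2$.} Your treatment of this case is off. A H\"older estimate for $g^{(m)}$ on $[t/2,t]$ does not follow from what you have proved, since it would need a modulus of continuity for $D_t^mu^h$. None is needed. Once the weighted bound at $\mu=2-\varepsilon$ is known, \eqref{AhDtg} at level $m$ holds for some $s''\in(0,\rho_1(\gamma))$ with $\sigma(s'')\le 2-\varepsilon$; small $s''$ works because $d/2\le 3/2$. Then
\[
\Big\|A_h\int_{t/2}^tS_h(t-r)g^{(m)}(r)\,dr\Big\|\le C\int_{t/2}^t(t-r)^{-1+s''/2}\big(r^{-m+\gamma/2-\sigma(s'')/2}+1\big)\,dr\le Ct^{-m+\gamma/2-1}+C,
\]
using $\sigma(s'')-s''<2$. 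This is the paper's device: a little positive spatial regularity in the integrand makes the kernel integrable, with no $\varepsilon$-loss.

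\textbf{Sum term.} There is no $\varepsilon$ to absorb here either. The $k$-th term is bounded by $t^{-m+\gamma/2-s/2}\,t^{1+(s'-\sigma(s'))/2}$. The extra factor is a positive power of $t$ because $\sigma(s')-s'<2$ strictly.
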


\begin{proof}
	Based on equation \eqref{eq2} and Assumption \ref{assum:nf_smooth}, $u^h(t)$ is differentiable of any order on $(0,T]$. 
	From \eqref{bound_uh} and \eqref{bound_g}, the formulas \eqref{AhDtuh_1} and \eqref{AhDtg} holds for $m=0$. 
	Assume \eqref{AhDtuh_1} holds for $m\leq l-1$ ($l=1,2,3$). In the following, we prove that it also holds for $m = l$. 
	
	We begin by making some preliminary preparations. 
	Using the chain rule and Assumption \ref{ass:Frech_dominant}, it holds that 
	\begin{equation}
		\begin{aligned}
			\Vert A_h^{s/2} D_t^m f_h(u^h(t)) \Vert &\leq  C\Vert A_h^{s/2}P_h( f'(u^h(t)) D_t^m u^h(t)) \Vert 
			\\
			& \quad + C \sum_{i=2}^{m}\:\:  \sum_{\substack{\alpha_1+..+\alpha_i=m\\\alpha_i\in N^+}}\Vert A_h^{s/2} P_h (f^{(i)}(u^h(t)) D_t^{\alpha_1} u^h(t)...D_t^{\alpha_i} u^h(t)) \Vert\\
			& \leq Ct^{-m+\gamma/2-\sigma(s)/2}+C,
		\end{aligned}
	\end{equation}
	where $0\leq m\leq l-1, -2\leq s< \rho_1(\gamma)$. If \eqref{AhDtuh_1} is valid for $m=l$, so does \eqref{AhDtg}. Additionally,  we set $V(t)=t^l D_t^l u^h(t)$. As in \cite[Lemma 3.7]{mukam}, it holds that
	\begin{align*}
		D_t V(t) &= lt^{l-1} D_t^l u^h(t)+ t^l D_t^{l+1} u^h(t) \\
		&= lt^{l-1} D_t^l u^h(t) + t^l \left(A_h D_t^l u^h(t)+D_t^l f_h(u^h(t))\right)\\
		&= A_h V(t) + lt^{l-1} D_t^l u^h(t) + t^l D_t^l f_h(u^h(t)).
	\end{align*}  
	Therefore by variation-of-constants formula, we have
	\begin{equation}\label{Du_voc}
		D_t^l u^h(t)=t^{-l}\int_{0}^{t}S_h(t-\tau)(l\tau^{l-1}D_t^l u^h(\tau) +\tau^l D_\tau^l f_h(u^h(\tau)))\text{d}\tau.
	\end{equation}   
	
	Now turn to the proof of estimate \eqref{AhDtuh_1}. We first consider the case $-2\leq s\leq 0$. It can be verified that
	\begin{align*}
		\Vert A_h^{s/2} D^{l}_t u^h(t)\Vert&\leq  \Vert A_h^{1+s/2}D_t^{l-1} u^h(t)\Vert +\Vert A_h^{s/2} D_t^{l-1}f_h(u^h(t))\Vert\\
		&\leq C t^{-l+\gamma/2-s/2}+  C t^{-l+1+\gamma/2-\sigma(s)/2}+C\\
		&\leq C t^{-l+\gamma/2-s/2}+C.
	\end{align*}
	From the above bound, we know the case of $s=0$, $m=l$. Select sufficiently small $\varepsilon'$ such that $(-\frac{d}{2}+2\varepsilon',\rho_1(\gamma),0)\in\mathcal{B}$. For $0<s<2-\frac{d}{2}+2\varepsilon'$, by  formula \eqref{Du_voc} and Assumption \ref{ass:Frech_dominant}, we deduce that
	\begin{equation}\label{Du1_res}
		\begin{aligned}
			\quad\:\: \|A_h^{s/2} D_t^l u^h(t)\|&\leq t^{-l}\int_0^t  \Vert A_h^{s/2}S_h(t-\tau)l\tau^{l-1} D_\tau^l u^h(\tau)\Vert \text{d}\tau
			\\
			&\qquad +t^{-l}\int_0^t \Vert A_h^{s/2} S_h(t-\tau) A_h^{d/4-\varepsilon'}\tau^l A_h^{-d/4+\varepsilon'} D^l_\tau f_h(u^h(\tau))\Vert \text{d}\tau
			\\
			&\leq C t^{-l}\int_0^t  (t-\tau)^{-s/2} \tau^{-1+\gamma/2} \text{d}\tau + C t^{-l}\int_0^t (t-\tau)^{-s/2-d/4+\varepsilon'} \tau^{\gamma/2} \text{d}\tau
			\\
			&\leq  Ct^{-l+\gamma/2-s/2}+ C.
		\end{aligned}
	\end{equation}
	Following the same approach as above, we can continue to expand the range of values for $s$. Observe the right-hand side of the inequality. For the first term, if $s=2$, the integrand is non-integrable. To address this, we let $D^{l}_\tau u^h(\tau)$ absorb an operator $A_h^{\varepsilon'}$. For the second term, the extension is made incrementally by successively applying   estimate \eqref{mult_Xh} with $(\min(\varepsilon',2+\varepsilon'-d),\rho_1(\gamma),2-\frac{d}{2}-\varepsilon')$, $(\varepsilon',\rho_1(\gamma),\frac{d}{2}-\varepsilon')\in\mathcal{B}$. This completes the proof.
\end{proof}

\begin{proposition}\label{pro:Km1}
	Let Assumptions \ref{assum:nf_smooth}-\ref{ass:Frech_dominant} be fulfilled. For any $v^h(t)\in \mathcal{K}_{m}^{\hat{p}} \setminus  \mathcal{K}'$, the following estimates hold.
	\begin{enumerate}[(i).]
		\item When $\hat{p}=2:$
		\begin{itemize}
			\item $m=2:$ $\|A_h^{s/2}v^h(t)\| \leq C(t^{-1+\gamma/2-\sigma(s)/2} + 1)$ for $-2\leq s<\rho_1(\gamma)$.
			\item $m\geq 3:$ $\delta^m\|A_h^{s/2}v^h(t)\| \leq C(\delta^3 t^{-2+\gamma/2-\sigma(s)/2} + 1)$ for $s\geq -2$ with $\sigma(s)<\rho_1(\gamma)$.
		\end{itemize}
		\item When $\hat{p}=3:$
		\begin{itemize}
			\item $m=2,\:3:$ $\|A_h^{s/2}v^h(t)\| \leq C(t^{-m+1+\gamma/2-\sigma(s)/2} + 1)$ for $s\geq -2 $ with $\sigma(s)<\rho_1(\gamma)$.
			\item $m\geq 4:$ $\delta^m\|A_h^{s/2}v^h(t)\| \leq C(\delta^4 t^{-3+\gamma/2-\sigma(s)/2} + 1)$ for $s\geq -2$ with $\sigma(\sigma(s))<\rho_1(\gamma)$.
		\end{itemize}
	\end{enumerate}
\end{proposition}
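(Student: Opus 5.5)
The proof goes by induction on $m$, following the recursive construction of $\mathcal{K}_m^{\hat p}$. Each element of $\mathcal{K}_m^{\hat p}\setminus\mathcal{K}'$ has the form $S_h(\delta)P_h(f^{(j)}(u^h(t))\prod_{i=1}^j v_i^h(t))$ with $v_i^h\in\mathcal{K}_{k_i}^{\hat p}$ and $\sum_i k_i=m-1$. Since $S_h(\delta)$ is uniformly bounded in every $\Vert A_h^{s/2}\cdot\Vert$ norm, it can be dropped in the sharp (non-$\delta$) bounds. The tools are: the first-derivative estimate \eqref{frech_first} for $j=1$; Assumption \ref{ass:Frech_dominant} for $j=2,3$; and Proposition \ref{pro:AhDtuh_1} for the factors $v_i^h=D_t^{k_i}u^h\in\mathcal{K}'$, which give $\Vert A_h^{q/2}D_t^{k}u^h\Vert\le C(t^{-k+\gamma/2-q/2}+1)$. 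Throughout, $t^{a}t^{b}$ is combined trivially and $+1$ terms are absorbed.

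\textbf{Base cases.} For $m=2$, $\mathcal{K}_2\setminus\mathcal{K}'$ consists only of $S_h(\delta)P_h(f'(u^h)D_tu^h)$. By \eqref{frech_first} with $v^h=D_tu^h$ at level $\sigma(s)$,
\[
\Vert A_h^{s/2}v^h\Vert\le C\bigl(t^{-1+\gamma/2-\sigma(s)/2}+1\bigr).
\]
For $m=3$ with $\hat p=3$ there are three kinds of terms.
\begin{itemize}
\item[(a)] $f'(u^h)D_t^2u^h$: apply \eqref{frech_first}, giving $t^{-2+\gamma/2-\sigma(s)/2}$.
\item[(b)] $f'(u^h)w$ with $w\in\mathcal{K}_2\setminus\mathcal{K}'$: apply \eqref{frech_first} and then the $m=2$ bound at level $\sigma(s)$. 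This needs $\sigma(s)<\rho_1(\gamma)$, which is exactly the hypothesis, and gives $t^{-1+\gamma/2-\sigma(\sigma(s))/2}$. We then need $\sigma(\sigma(s))-\sigma(s)\le 2$ to reach the target exponent; if this fails, we instead use $\Vert A_h^{\sigma(s)/2}S_h(\delta)\cdot\Vert$ and exploit the fact that the inner term also carries an $S_h$.
\item[(c)] $f''(u^h)D_tu^h D_tu^h$: apply \eqref{frech_second} with $s_{21}+s_{22}\le\gamma+\sigma(s)$. This yields $t^{-2+\gamma-(s_{21}+s_{22})/2}\le t^{-2+\gamma/2-\sigma(s)/2}$ (for $t\le T$).
\end{itemize}
This constraint on $s_{21}+s_{22}$ is precisely why Assumption \ref{ass:Frech_dominant} was formulated in this form. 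For third derivatives, \eqref{frech_third} with $s_{31}+s_{32}+s_{33}\le 2\gamma+\sigma(s)$ gives the analogous bound $t^{-3+\gamma/2-\sigma(s)/2}$.

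\textbf{Higher $m$ ($\delta$-weighted bounds).} Here singularities worse than the target appear, and we trade powers of $\delta$ for powers of $t$. The key device is the smoothing of $S_h(\delta)$: $\Vert A_h^{\alpha}S_h(\delta)\Vert\le C\delta^{-\alpha}$. Each extra level of nesting costs at most a factor $\delta^{-1}$ (a shift of at most $2$ in the Sobolev index, since $\sigma(s)-s<2$). The alternative to paying this smoothing cost is to pay a factor $t^{-1}$ from the inner derivative bound.

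For a term at level $m\ge\hat p+2$, the plan is to show, by induction on $m$,
\[
\delta^m\Vert A_h^{s/2}v^h\Vert\le C\bigl(\delta^{\hat p+1}t^{-\hat p+\gamma/2-\sigma(s)/2}+1\bigr).
\]
The induction step: estimate the outer factor $f^{(j)}$ by \eqref{frech_first} or Assumption \ref{ass:Frech_dominant}, then bound each inner factor either by the induction hypothesis (shifted) or by $\delta^{-k}$ times a bounded quantity via the smoothing above. Using $\delta\le t$ where $t\ge\delta$, excess negative powers of $t$ are converted into powers of $\delta$. For $t<\delta$, only $t_k\ge t_1=\delta$ actually occurs in the local error, so this case is not needed.

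\textbf{Main obstacle.} The main difficulty is the index bookkeeping in the nested cases. Applying $\sigma$ repeatedly pushes the Sobolev index upward, and it must stay below $\rho_1(\gamma)$ so that \eqref{frech_first} remains applicable; this is why the hypothesis $\sigma(\sigma(s))<\rho_1(\gamma)$ appears for $\hat p=3$, $m\ge4$. Where the index would exceed this, the plan is to use the $S_h(\delta)$ factor to lower it again, at a cost of $\delta^{-1}$ per two units of shift, and to check that the accumulated powers still match $\delta^{\hat p+1}$. Each recursive pattern of $\mathcal{K}_m$ has to be checked in this way.
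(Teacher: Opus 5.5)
Your proposal follows essentially the paper's proof. It uses induction along the recursive construction of $\mathcal{K}_m^{\hat p}$ and bounds the low levels directly by \eqref{frech_first}, Assumption \ref{ass:Frech_dominant} and Proposition \ref{pro:AhDtuh_1}; the paper also lists all of $\mathcal{K}^3_4$ as an explicit base case, where $f'f'f'u'$ is what forces $\sigma(\sigma(s))<\rho_1(\gamma)$. The higher levels are closed by trading the one extra factor $\delta$ against the smoothing $\Vert A_h^{(\sigma(s)-s)/2}S_h(\delta)\Vert_{\mathcal{L}(X_h)}\le C\delta^{-(\sigma(s)-s)/2}$ supplied by the $S_h(\delta)$ in the inner term, using $\sigma(s)-s<2$.

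The paper's step is simpler only in that it always smooths straight back to the same index $s$ and applies the inductive hypothesis there. That makes your ``shifted hypothesis'' branch, the restriction to $t\ge\delta$, the accumulated bookkeeping, and the fallback in case (b) unnecessary, since $\sigma(q)-q<2$ holds for every admissible $q$, in particular $q=\sigma(s)$.
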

\begin{proof}
	We restrict our proof to the case $\hat{p}=3$, and the proof for $\hat{p}=2$ follows through analogous arguments. The functions in $\mathcal{K}^3_m\setminus \mathcal{K}'$ can be expressed using compact symbolic notation, with the cases $m=2,3,4$ given as follows: 
	\begin{itemize}
		\item $m=2$ : $f'u'$.
		\item $m=3$ : $f'u''$, $f''u'u'$, $f'f'u'$.
		\item $m=4$ : $f'u'''$, $f'f'u''$, $f'f''u'u'$, $f'f'f'u'$,  $f''u'u''$, $f''u'f'u'$, $f'''u'u'u'$.
	\end{itemize}
	By employing estimate \eqref{frech_first} together with Assumption \ref{ass:Frech_dominant} and the constraint on $s$ specified in the proposition, we can rigorously verify that the estimates for the functions listed above hold precisely as stated in the proposition. Subsequently, we assume the proposition's estimates for $\hat{p}=3$, $m>4$ hold when $m < l-1$. It remains to prove the case $m = l$.  According to the recursive definition, there exist three cases. We shall examine the first case $\delta^l\Vert A_h^{s/2}S_h(\delta)P_h(f'(u^h(t))v^h(t))\Vert\leq  C(\delta^4 t^{-3+\gamma/2-\sigma(s)/2} + 1)$ as an illustrative example, where $v^h(t)\in \mathcal{K}^{3}_{l-1}$. There are two cases for $v^h(t)$. The first case $v^h(t)\in \mathcal{K}'$, where the values of $s$ are not strictly constrained, and the estimate holds when substituted. The second case $v^h(t)\in \mathcal{K}^{3}_{l-1}\setminus \mathcal{K}'$, in which case $v^h(t)$ is multiplied by an exponential function $S_h(\delta)$, and the following technique can be used for estimation: 
	\begin{align*}
		\delta^l\Vert A_h^{s/2}S_h(\delta)P_h(f'(u^h(t))v^h(t)) \Vert &\leq  \delta^{l}\Vert A_h^{\sigma(s)/2-s/2} A_h^{s/2}v^h(t)\Vert 
		\\
		&\leq C\delta^{1-s/2+\sigma(s)/2}(\delta^4 t^{-3+\gamma/2-\sigma(s)/2} + 1).
	\end{align*}
	This completes the proof.
\end{proof}  

\subsection{Main result}
Now, the main result of this paper is formulated in the following theorem.
\begin{theorem}\label{theo:tempErr1}
	Suppose the initial value problem \eqref{eq1} satisfies Assumptions \ref{assum:nf_smooth}--\ref{ass:Frech_dominant}. Consider for its numerical solution a $p$th-order explicit exponential Runge-Kutta method \eqref{NumSche1} with a constant stepsize $\delta$. For $p\geq 2$, if there exist $r\in(0,2)$ such that $\sigma(-r)<\rho_1(\gamma)$ and $r'\in(0,\rho_1(\gamma))$, we obtain the temporal error for $\hat{p}=2$:
	\begin{align}\label{res_theo3}
		\Vert e_{n+1}\Vert\leq C t_n^{-r/2} \delta^{\min(1+\gamma/2-\sigma(-r)/2,\:\hat{p})}
		+  C t_{n}^{r'/2-1} \delta^{\min(2+{\gamma}/{2}-{\sigma(r')}/{2},\:\hat{p})},
	\end{align}
	where $C$ is independent of $h$, $\delta$, and $n$, and the function $\sigma(\cdot)$ is defined by \eqref{eq:sigma} in Section 5.1. Furthermore, for $p\geq 3$, if there exist $r\in(0,2)$ and $r'\in (0,\rho_1(\gamma))$ such that $\sigma(\sigma(-r))<\rho_1(\gamma)$ and $\sigma(r')<\rho_1(\gamma)$, then set $\hat{p}=3$ in \eqref{res_theo3}.
\end{theorem}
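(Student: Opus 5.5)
The plan starts from the error recursion \eqref{err_L2Spli}. Bounding the local-error sum $\sum_{k=1}^{n+1}S_h(t_{n+1-k})\tilde e_k$ should give a quantity of the form $C t_n^{-r/2}\delta^{q_1}+Ct_n^{r'/2-1}\delta^{q_2}$. A discrete Gronwall lemma with weakly singular kernel $t_{n-k}^{-r_1/2}$, and the extra term $\delta^{1-r_1/2}\|e_n\|$ absorbed for $\delta$ small, then transfers this bound to $\|e_{n+1}\|$ with the same form. For the local error I use \eqref{Err_localReprens} with $p$ replaced by $\hat p$. I split $\tilde e_{k+1}=\tilde e^{(\hat p)}_{k+1}+\mathbf{O}_{\hat p+1}(t_k)$ and treat the two parts by different mechanisms. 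The first part carries the factors $\psi_{\hat p}(\delta A_h)-\psi_{\hat p}(0)$ and $b_i(\delta A_h)-b_i(0)$, which vanish at $z=0$. The second part is a tail $\delta^{\hat p+1}V_{\hat p+1}+\cdots$ of elements of $\mathcal{K}^{\hat p}_m$, and each term is controlled directly by Proposition \ref{pro:Km1}.

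For the remainder $\mathbf{O}_{\hat p+1}(t_k)$, I put $S_h(t_{n-k})$ into the norm as $A_h^{r/2}S_h(t_{n-k})\,A_h^{-r/2}$. The term $k=n$, where $t_{n-k}=0$, is handled separately using the $S_h(\delta)$ factor present in elements of $\mathcal{K}^{\hat p}_m\setminus\mathcal{K}'$. For the other terms, Proposition \ref{pro:Km1} with $s=-r$ gives $\delta^{m}\|A_h^{-r/2}v^h(t_k)\|\le C(\delta^{\hat p+1}t_k^{-\hat p+\gamma/2-\sigma(-r)/2}+1)$. This is where the hypothesis $\sigma(-r)<\rho_1(\gamma)$, respectively $\sigma(\sigma(-r))<\rho_1(\gamma)$ for $\hat p=3$, is used. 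The sum $\delta\sum_k t_{n-k}^{-r/2}\delta^{\hat p}t_k^{-\hat p+\gamma/2-\sigma(-r)/2}$ is then a discrete beta-type sum. When the exponent of $t_k$ is below $-1$, the $k=1$ term dominates and yields $\delta^{1+\gamma/2-\sigma(-r)/2}$ times $t_n^{-r/2}$; otherwise one obtains $\delta^{\hat p}$. This gives the first term of \eqref{res_theo3}. Contributions that come from $\mathcal{K}'$, i.e. pure time derivatives of $u^h$, are instead bounded with \eqref{AhDtuh_1}, which is no more singular.

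For $\tilde e^{(\hat p)}_{k+1}$ I use the standard summation-by-parts trick of Hochbruck--Ostermann. Write $\psi_{\hat p}(\delta A_h)-\psi_{\hat p}(0)=\delta A_h\tilde\psi(\delta A_h)$ with $\tilde\psi$ bounded, and similarly for $b_i$. The sum then becomes $\delta^{\hat p}\sum_k \delta A_h S_h(t_{n-k})\tilde\psi\,w(t_k)$, where $w$ is $g^{(\hat p)}$ or $\mathbf{Q}_{\hat p}$. I split $A_h=A_h^{1-r'/2}A_h^{r'/2}$, apply $\|A_h^{1-r'/2}S_h(t_{n-k})\|\le Ct_{n-k}^{r'/2-1}$, and bound $\|A_h^{r'/2}w(t_k)\|\le C t_k^{-\hat p+1+\gamma/2-\sigma(r')/2}$ by \eqref{AhDtg} and Proposition \ref{pro:Km1}, which needs $r'<\rho_1(\gamma)$ and $\sigma(r')<\rho_1(\gamma)$. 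Since $g^{(\hat p)}$ is $D_t^{\hat p}$, its bound has exponent one lower than for elements of $\mathcal{K}_{\hat p}$, so I use the time-derivative bound for $g^{(\hat p-1)}$ after one telescoping step, i.e. Abel summation in $k$. Alternatively I write $\delta^{\hat p}g^{(\hat p)}(t_k)$ as $\delta^{\hat p-1}(g^{(\hat p-1)}(t_{k+1})-g^{(\hat p-1)}(t_k))$ plus a higher-order term, which is what produces the power $\min(2+\gamma/2-\sigma(r')/2,\hat p)$. Evaluating the resulting singular discrete convolution as before yields the factor $t_n^{r'/2-1}$.

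The main obstacle is this Abel summation step. The difference $S_h(t_{n-k})-S_h(t_{n-k-1})$ produces an extra $\delta A_h$, and the boundary terms at $k=0$ and $k=n$ must be balanced so that no power of $t_n$ worse than $t_n^{r'/2-1}$ appears and the near-singular $k=1$ term gives exactly the stated $\delta$-power. I will first try to avoid summation by parts altogether by using the smoothing of the single $\delta A_h$ factor together with the $\mathcal{K}_{\hat p}$ estimate. Abel summation is the fallback if the exponents do not close.
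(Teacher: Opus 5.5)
There is a genuine gap at the first step. Your treatment of the interior steps matches the paper's. For $\mathbf{O}_{\hat p+1}(t_k)$ you pair $A_h^{r/2}S_h(t_{n-k})$ with Proposition~\ref{pro:Km1} at $s=-r$, and for $\tilde e^{(\hat p)}_{k+1}$ you use one factor $\delta A_h$ split as $A_h^{1-r'/2}A_h^{r'/2}$. However, the sum in \eqref{err_L2Spli} also contains $S_h(t_n)\tilde e_1$, the local error of the step started from $u^h(0)=P_hu_0$ ($k=0$ in your indexing). Your beta-type sum effectively starts at $k=1$; your power $\delta^{1+\gamma/2-\sigma(-r)/2}$ comes from $t_1=\delta$. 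For $k=0$ the expansion \eqref{Err_localReprens} cannot be used, for three reasons. Every bound in Propositions~\ref{pro:AhDtuh_1} and~\ref{pro:Km1} carries a negative power of $t$ and is infinite at $t_0=0$. The derivatives $D_t^m u^h(0)$ are finite only with $h$-dependent constants (e.g.\ $\|A_hP_hu_0\|\le Ch^{\gamma-2}$ by \eqref{Ah_inver}), while $C$ in \eqref{res_theo3} must be independent of $h$. And the available bound on the remainder integrand in \eqref{Expan_exa}, $\|g^{(p)}(s\theta\delta)\|\le C(s\theta\delta)^{-p+\gamma/2-\sigma(0)/2}$, is not integrable at $s=0$. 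Since this term must be estimated with the full accuracy $t_n^{-r/2}\delta^{\min(1+\gamma/2-\sigma(-r)/2,\hat p)}$, it cannot be dropped.

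The missing ingredient is the separate first-step argument of \cite{ERK1}, which the paper uses. Write $b_i=\int_0^1S_h((1-\theta)\delta)l_i(\theta)\,d\theta$ with $\sum_il_i=1$ by \eqref{orderCond_1}. Then $\tilde e_1=\delta\int_0^1S_h((1-\theta)\delta)\sum_il_i(\theta)\bigl(f_h(\widehat U^h_{0i})-f_h(u^h(\theta\delta))\bigr)\,d\theta$, and you split the bracket into two parts. The first part, $f_h(\widehat U^h_{0i})-f_h(u^h(c_i\delta))$, is bounded in $\|A_h^{-r/2}\cdot\|$ by $C\|A_h^{\sigma(-r)/2}(\widehat U^h_{0i}-u^h(c_i\delta))\|$ via the Lipschitz bound \eqref{Lips2}. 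These stage errors are then controlled recursively in $i$ through the analogous representation of $a_{ij}$. The second part, $g(c_i\delta)-g(\theta\delta)$, is expanded in integral form only to first order, or to second order using $\sum_ib_ic_i=\varphi_2$ when $\gamma-\sigma(-r)>2$. Thus only $g'$ or $g''$ integrated along $[0,c_i\delta]$ and $[0,\theta\delta]$ appear, and \eqref{AhDtg} yields $C\delta^{\gamma/2-\sigma(-r)/2}$. This gives $\|A_h^{-r/2}\tilde e_1\|\le C\delta^{\min(1+\gamma/2-\sigma(-r)/2,\hat p)}$, hence the first term of \eqref{res_theo3}.

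Separately, you can drop the Abel-summation fallback. The order-$\hat p$ term multiplies $g^{(\hat p-1)}(t_k)$, not $g^{(\hat p)}(t_k)$: compare $\delta^i\varphi_ig^{(i-1)}$ in \eqref{Expan_exa} and $\delta^j\psi_jg^{(j-1)}$ in the sample expansion; the $g^{(p)}$ in \eqref{Err_localReprens} is an index slip. So \eqref{AhDtg} with $m=\hat p-1$ gives exactly $t_k^{-\hat p+1+\gamma/2-\sigma(r')/2}$, and your primary route closes. Summation by parts would instead move the difference onto $A_hS_h(t_{n-k})$ and create the non-integrable kernel $t_{n-k}^{r'/2-2}$.
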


\begin{remark}
	The parameter $p$ corresponds to the order of the local error expansion. Higher-order methods yield both second- and third-order expansions, allowing selection of the optimal result. Although the third-order expansion requires stronger conditions on $r$ and $r'$, these constraints relax as $\gamma$ increases. If we are only concerned with the convergence order of the method, choosing $r = 2-\varepsilon$ and $r'= \varepsilon$ results in a convergence order of $1+{\gamma}/{2}+\rho_1(\gamma)/2$. The $L^2$-error analysis presented in the above theorem can be naturally extended to $H^1$ estimates by following the approach in \cite{ERK_FNM}.
\end{remark}

\begin{proof}
	According to \eqref{err_L2Spli}, there exist $r_1\in (0,\frac{d}{2})$ such that
	\begin{align}\label{mid_theore2_en}
		\Vert e_{n+1}\Vert \leq \left\Vert \sum_{k=0}^{n} S_h(t_{n-k})\tilde{e}_{k+1}\right\Vert + \delta \sum_{k=1}^{n} t_{n+1-k}^{-r_1/2} \Vert e_k \Vert.
	\end{align}
	It remains to estimate the first term of the right-hand side. 
	
	Consider the case where $0<k<n$. The local error $\tilde{e}_{k+1}$ has the expansion form
	\begin{align*}
		\tilde{e}_{k+1}
		=\tilde{e}_{k+1}^{(\hat{p})}+\mathbf{O}_{\hat{p}+1}(t_k),
	\end{align*}
	where the descriptions of $\tilde{e}_{k+1}^{(\hat{p})}$ and $\mathbf{O}_{\hat{p}+1}(t_k)$ given in \eqref{Err_localReprens}. As discussed at the end of Section \ref{sec:NumSche_ErrExpa}, it can be equivalently replaced by $V_{\hat{p}} (t_k)$ and $\delta^{\hat{p}+1} V_{\hat{p}+1}(t_k)+\delta^{\hat{p}+2}V_{\hat{p}+2}(t_k)+...$. Under the theorem's conditions on $r$ and by Proposition \ref{pro:Km1}, it holds that
	\begin{align*}
		\Vert S_h(t_{n-k})\textbf{O}_{\hat{p}+1}(t_{k})\Vert
		\leq &\: C \Vert S_h(t_{n-k})A_h^{r/2}\Vert_{\mathcal{L}(X_h)} \: \delta^{\hat{p}+1} t_k^{-\hat{p}+\gamma/2-\sigma(-r)/2} \\
		\leq &\: C \delta^{\hat{p}+1}  t_{n-k}^{-r/2} t_{k}^{-\hat{p}+\gamma/2-\sigma(-r)/2}.
	\end{align*} 
	There exist bounded operators $\tilde{\psi}_p$ and $\tilde{b}_i$ with
	\begin{align*}
		\psi_p(\delta A_h)-\psi_p(0)=\delta A_h \tilde{\psi}_p,\quad b_i(\delta A_h)-b_i(0) = \delta A_h \tilde{b}_i.
	\end{align*}
	From the above formulas and the theorem's conditions on $r'$, we get
	\begin{align*}
		\Vert S_h(t_{n-k}) \tilde{e}_{k+1}^{(\hat{p})}\Vert
		\leq &\:\delta^{\hat{p}}\Vert S_h(t_{n-k})A_h \delta A_h^{-r'/2} A_h^{r'/2} V_{\hat{p}}(t_{k})\Vert\\
		\leq & \: \delta^{\hat{p}+1} t_{n-k}^{r'/2-1} t_{k}^{-\hat{p}+1+\gamma/2-\sigma(r')/2}.
	\end{align*}
	
	Consider the case where $k=n$. 
	$$
	\Vert \tilde{e}_{n+1}\Vert\leq C\delta^{\hat{p}} \Vert V_{\hat{p}}(t_n)\Vert \leq C\delta^{\hat{p}} t_n^{-\hat{p}+1+\gamma/2-\sigma(0)/2}.$$
	
	Consider the case where $k=0$. The error expansion is located at the initial point, but some estimates of $D_t^m u^h(0)$ is unknown. There are two approaches to analyze the error of the EERK method. One is from \cite{luan12stiff,luan14stiff}, which was used in above analysis. However, according to \eqref{recur1_end}, the terms in the expansion of local error require an estimate of $D_t^m u^h(0)$. The other approach, introduced by \cite{ERK1},  uses $D_t^m u^h(\theta \delta_0)$ rather than $D_t^mu^h(0)$. Noting that $\tilde{e}_1=e_1$, we use this approach to estimate $\Vert S_h(t_n)\tilde{e}_1 \Vert$. From the Lipschitz condition \eqref{Lips2}, we get
	\begin{equation}\label{Ae1_mid1}
		\begin{aligned}
			\Vert A_h^{-r/2}\tilde{e}_1\Vert &=\left\Vert A_h^{-r/2}\delta \int_{0}^{1} S_h((1-\theta)\delta)\sum_{i=1}^{\kappa}l_i(\theta)\left(f_h(\widehat{U}^h_{0i})-f_h(u^h(\theta \delta))\right)\text{d}\theta\right\Vert
			\\
			&\leq  \delta \sum_{i=1}^{\kappa}\Vert A_h^{\sigma(-r)/2} \widetilde{E}_{0i}\Vert
			+\delta \left\Vert \int_{0}^{1} A_h^{-r/2} S_h((1-\theta)\delta)\sum_{i=1}^{\kappa}l_i(\theta) \big(g({c_i\delta})-g(\theta\delta)\big)\text{d}\theta\right\Vert,
		\end{aligned}
	\end{equation}
	where $\widetilde{E}_{0i}=\widehat{U}_{0i}^h-\widetilde{U}_{0i}^h$, 
	$b_i(\delta A_h)= \int_0^1 S_h((1-\theta)\delta)l_i(\theta)\text{d}\theta$ and $\sum_{i=1}^{\kappa}l_i(\theta)=1$ by \eqref{orderCond_1}. To prove $\Vert A_h^{-r/2}\tilde{e}_1\Vert\leq C \delta^{\min(1+\gamma/2-\sigma(-r)/2,\:\hat{p})}$, we first analyze the second term in \eqref{Ae1_mid1}. When $\gamma-\sigma(-r)\leq 2$, expanding $g(t_{0i})$ and $g(\theta \delta)$ in Taylor series at $t_0$ up to the first derivative, employing the estimate \eqref{AhDtg}, we obtain
	\begin{align*}
		&\left\Vert \int_{0}^{1} A_h^{-r/2} S_h((1-\theta)\delta)\sum_{i=1}^{\kappa}l_i(\theta)\left( g(t_{0i})-g(\theta \delta)\right)\right\Vert\\
		\leq &\: \left\Vert \int_{0}^{1} A_h^{-r/2} S_h((1-\theta)\delta)\delta \sum_{i=2}^{\kappa}l_i(\theta) \left( \int_{0}^{1}g'(\xi c_i\delta)\text{d}\xi - \int_{0}^{1}g'(\xi\theta\delta)\text{d}\xi\right)\right\Vert
		\\
		\leq &\: C\delta\sum_{i=2}^{\kappa} \left(\int_{0}^{1}(\xi c_i\delta)^{-1+\gamma/2-\sigma(-r)/2}\text{d}\xi+ \int_{0}^{1}\int_{0}^{1}(\xi\theta\delta)^{-1+\gamma/2-\sigma(-r)/2}\text{d}\xi \text{d}\theta \right)\\
		\leq &\: C\delta^{\gamma/2-\sigma(-r)/2}.
	\end{align*}
	When $2<\gamma-\sigma(-r)\leq 4$ and the method fulfill the second-order condition $\sum_{i=2}^{\kappa}b_ic_i=\varphi_2$, expanding $g(t_{0i})$ and $g(\theta \delta)$ in Taylor series at $t_0$ up to the second derivative and applying the estimate \eqref{AhDtg} yields
	\begin{align*}
		&\left\Vert \int_{0}^{1} A_h^{-r/2} S_h((1-\theta)\delta) \sum_{i=1}^{\kappa}l_i(\theta)\left( g(t_{0i})-g(\theta \delta)\right)\right\Vert\\
		\leq &\: \left\Vert \int_{0}^{1} A_h^{-r/2} S_h((1-\theta)\delta)\delta^2 \sum_{i=2}^{\kappa}l_i(\theta) \left( \int_{0}^{1}(1-\xi)g''(\xi c_i\delta)\text{d}\xi + \int_{0}^{1}(1-\xi)g''(\xi\theta\delta)\text{d}\xi\right)\right\Vert\\
		\leq &\: C\delta^{2}\left(\sum_{i=2}^{\kappa} \int_{0}^{1}(1-\xi)(\xi c_i\delta)^{-2+\gamma/2-\sigma(-r)/2}\text{d}\xi+ \int_{0}^{1}\int_{0}^{1}(1-\xi)(\xi\theta\delta)^{-2+\gamma/2-\sigma(-r)/2}\text{d}\xi \text{d}\theta \right)\\
		\leq &\: C\delta^{\gamma/2-\sigma(-r)/2}.
	\end{align*}
	
	The next step is to prove $\Vert A_h^{\sigma(-r)/2}\widetilde{E}_{0i}\Vert \leq C\delta^{\gamma/2-\sigma(-r)/2}$. Similar to \eqref{Ae1_mid1}, we obtain
	\begin{align*}
		\Vert A_h^{\sigma(-r)/2}\widetilde{E}_{0i}\Vert &=\left\Vert \delta \int_{0}^{1} A_h^{\sigma(-r)/2+r/2}S_h((1-\theta)c_i\delta)\left(\sum_{j=1}^{i-1}l_{i,j}(\theta)A_h^{-r/2}(f_h(\widehat{U}_h^{0,j})-f_h(U(\theta \delta)))\right)\right\Vert\\
		&\leq C\delta^{1-\sigma(-r)/2-r/2} \left(\sum_{j=1}^{i-1}\Vert A_h^{\sigma(-r)/2} \widetilde{E}_{0,j}\Vert+\delta^{\gamma/2-\sigma(-r)/2}\right),
	\end{align*}
	where $a_{ij}(\delta A_h)= \int_0^1 S_h((1-\theta)c_i\delta)l_{ij}(\theta)\text{d}\theta$ and $\sum_{j=1}^{i-1}l_{ij}(\theta)=c_i$ by \eqref{orderCond_1}. Since $i$ is finite, the conclusion can be drawn by recursion. Here we have verified the first-step local error only for orders up to three. Attaining higher orders would require a higher-order Taylor expansion of $f_h$ in \eqref{Ae1_mid1}, and the imposition of corresponding higher-order conditions (see \cite{ERK1} for details). In conclusion, using \cite[Lemma 6.1]{IRK2} for summation, we have
	\begin{equation*}
		\begin{aligned}
			\left\Vert \sum_{k=0}^{n} S_h(t_{n-k})\tilde{e}_{k+1}\right\Vert & \leq  \sum_{k=1}^{n-1} C \delta^{\hat{p}+1} t_{n-k}^{-r/2} t_{k}^{-\hat{p}+\gamma/2-\sigma(-r)/2} 
			+ \sum_{k=1}^{n-1} C \delta^{\hat{p}+1} t_{n-k}^{r'/2-1} t_{k}^{-\hat{p}+\gamma/2+1-\sigma(r')/2}
			\\
			&\quad + C\delta^{\hat{p}} t_n^{-\hat{p}+1+\gamma/2-\sigma(0)/2} + t_n^{-r/2} \delta^{\min(1+\gamma/2-\sigma(-r)/2,\:\hat{p})}				
			\\
			&\leq C t_n^{-r/2} \delta^{\min(1+\gamma/2-\sigma(-r)/2,\:\hat{p})}
			+  C t_{n}^{r'/2-1} \delta^{\min(2+\gamma/2-\sigma(r')/2,\: \hat{p})}.
		\end{aligned}
	\end{equation*}
	Substituting into \eqref{mid_theore2_en} and using Gronwall's inequality(see \cite[Lemma 6.2]{IRK2}) completes the proof.
\end{proof}

\section{Numerical Experiments}\label{sec:NumExpe}
In this section, we present numerical tests to support the theoretical analysis in Section \ref{sec:ErrEst}. We consider the semilinear parabolic problem \eqref{eq1} in the domain $\Omega = (0,1)\times (0,1)$ up to $T = 1$. The sectorial operator $A$ is realization of $\Delta-I$ in $L^2$ under the homogeneous Neumann boundary condition on $\partial \Omega$. The spatial discretization is performed using the linear Galerkin finite element method \eqref{eq2}  with an adequately small spatial mesh $h=1/64$, while the temporal discretization relies on the third-order EERK method \eqref{EERK3}. 

The main focus is to compute the convergence order of the third-order EERK method at $T$ for semilinear parabolic problems \eqref{eq2} with various initial values and nonlinear terms. We consider the following initial values.
\begin{enumerate}[(i).]
	\item $u_0(x_1,x_2)=0.5\:\text{sign}(x_2-0.5)+1.3\in D(A^{1/4-\varepsilon})\cap L^\infty$.
	\item $u_0(x_1,x_2)=(x_1^2+x_2^2)^{-1/4}\in D(A^{1/4-\varepsilon})$.
	\item $u_0(x_1,x_2)=0.5(x_1^2+x_2^2)+1\in D(A^{3/4-\varepsilon})$.
	\item  $u_0(x_1,x_2) = (2x_1^{3/2}-x_1^3)(2x_2^{3/2}-x_2^3) +1 \in D(A^{1-\varepsilon})$.
\end{enumerate}
Substitute these initial values into the semilinear parabolic problem with the following nonlinear terms.
\begin{enumerate}[(1).]
	\item $f(u)=-(u+1)(u-1.5)+u$ with $\rho_1(\gamma)=\gamma$. 
	\item $f(u)= -1/8\:(u+1)^2(u-1.5)+u$ with  $\rho_1(\gamma)=2\gamma-1$. 
	\item $f(u)=-u^4/(1+u^2)+u+81/52$ with  $\rho_1(\gamma)=\gamma$.
\end{enumerate}
It can be verified through Examples \ref{example:poly}-\ref{example:less_d2case} that Assumptions \ref{assum:nf_smooth}-\ref{ass:Frech_dominant} hold. The numerical orders of convergence are computed by
\begin{equation*}
	\log\left(\frac{\Vert U_{T,N}-U_{T,2N} \Vert } {\Vert U_{T,2N}-U_{T,4N} \Vert}\right)/\log(2),
\end{equation*} 
where $U_{T,N}$ represents the value at $T$, obtained by applying the third-order EERK method to the discretized problem \eqref{eq2} with a constant step size of $T/N$. For $\gamma<1$, Theorem \ref{theo:tempErr1} yields a convergence order of $1+{\gamma}/{2}+\rho_1(\gamma)/{2}-\varepsilon$. The numerical results for initial values (\rmnum{1}) and (\rmnum{2}) in Tables \ref{tab:fun1}-\ref{tab:fun3} align with the theoretical predictions. Notably, initial value (\rmnum{1}) belongs to $L^{\infty}$, while for the nonlinear term in case $(2)$, the solution $\Vert u^h(t)\Vert_{\infty}$ remains uniformly bounded with respect to the $h$. As shown in Example \ref{example:poly}, we have $\rho_1(1/2)=1/2$ in this case. For $\gamma>1$, Theorem \ref{theo:tempErr1} yields a convergence order of $1+\gamma-\varepsilon$. Numerical results for initial values (\rmnum{3}) and (\rmnum{4}) in Tables \ref{tab:fun1}-\ref{tab:fun3} validate this theoretical analysis. 

\begin{table}[tbhp]
	\centering
	\caption{The temporal discretization  convergence orders for model with nonlinear term ($1$) and initial values (\rmnum{1})-(\rmnum{4}). Theoretical convergence orders are listed in the last row.}
    \label{tab:fun1}
	\begin{tabular}{ccccccccccccc}
		\hline
		\multirow{2}{*}{$N$} & & \multicolumn{2}{c}{initial data (\rmnum{1})} &&  \multicolumn{2}{c}{initial data (\rmnum{2})}  && \multicolumn{2}{c}{initial data (\rmnum{3}) } && \multicolumn{2}{c}{initial data (\rmnum{4})}
		\\ \cline{3-4} \cline{6-7} \cline{9-10} \cline{12-13} 
		~& & Error & Order &&  Error & Order &&  Error & Order &&  Error & Order 
		\\ \hline
		$2^6$ & ~ & 5.180E-06 & -- & ~ & 5.873E-06 & -- & ~ & 1.130E-07 & -- & ~ & 1.057E-07 & -- \\ 
        $2^7$ & ~ & 1.758E-06 & 1.559 & ~ & 1.967E-06 & 1.578 & ~ & 2.020E-08 & 2.484 & ~ & 1.376E-08 & 2.941 \\ 
        $2^8$ & ~ & 5.935E-07 & 1.566 & ~ & 6.612E-07 & 1.572 & ~ & 3.598E-09 & 2.489 & ~ & 1.787E-09 & 2.945 \\ 
        $2^9$ & ~ & 1.978E-07 & 1.586 & ~ & 2.215E-07 & 1.578 & ~ & 6.417E-10 & 2.487 & ~ & 2.322E-10 & 2.944 \\  
        --&&-- & $1.5-\varepsilon$ &&--& $1.5-\varepsilon$ && --& $2.5-\varepsilon$ &&--&$3-\varepsilon$ \\
        \hline
	\end{tabular}
\end{table}

\begin{table}[tbhp]
	\centering
	\caption{The temporal discretization   convergence orders for model with nonlinear term ($2$) and initial values (\rmnum{1})-(\rmnum{4}). Theoretical convergence orders are listed in the last row.}
    \label{tab:fun2}
	\begin{tabular}{ccccccccccccc}
		\hline
		\multirow{2}{*}{$N$} & & \multicolumn{2}{c}{initial data (\rmnum{1})} &&  \multicolumn{2}{c}{initial data (\rmnum{2})}  && \multicolumn{2}{c}{initial data (\rmnum{3}) } && \multicolumn{2}{c}{initial data (\rmnum{4})}
		\\ \cline{3-4} \cline{6-7} \cline{9-10} \cline{12-13} 
		~& & Error & Order &&  Error & Order &&  Error & Order &&  Error & Order 
		\\ \hline
		$2^6$ & ~ & 1.567E-05 & -- & ~ & 6.396E-05 & -- & ~ & 3.907E-07 & -- & ~ & 2.770E-07 & -- \\ 
        $2^7$ & ~ & 5.394E-06 & 1.538  & ~ & 2.555E-05 & 1.324  & ~ & 7.232E-08 & 2.434  & ~ & 3.594E-08 & 2.946   \\ 
        $2^8$ & ~ & 1.834E-06 & 1.556  & ~ & 1.014E-05 & 1.333  & ~ & 1.322E-08 & 2.452  & ~ & 4.654E-09 & 2.949   \\ 
        $2^9$ & ~ & 6.135E-07 & 1.580  & ~ & 3.991E-06 & 1.345  & ~ & 2.403E-09 & 2.460  & ~ & 6.032E-10 & 2.948   \\ 
        --&&-- & $1.5-\varepsilon$ &&--& $1.25-\varepsilon$ && --& $2.5-\varepsilon$ &&--&$3-\varepsilon$ \\
        \hline
	\end{tabular}
\end{table}

\begin{table}[tbhp]
	\centering
	\caption{The temporal discretization   convergence orders for model with nonlinear term ($3$) and initial values (\rmnum{1})-(\rmnum{4}). Theoretical convergence orders are listed in the last row.}
    \label{tab:fun3}
	\begin{tabular}{ccccccccccccc}
		\hline
		\multirow{2}{*}{$N$} & & \multicolumn{2}{c}{initial data (\rmnum{1})} &&  \multicolumn{2}{c}{initial data (\rmnum{2})}  && \multicolumn{2}{c}{initial data (\rmnum{3}) } && \multicolumn{2}{c}{initial data (\rmnum{4})}
		\\ \cline{3-4} \cline{6-7} \cline{9-10} \cline{12-13} 
		~& & Error & Order &&  Error & Order &&  Error & Order &&  Error & Order 
		\\ \hline
		$2^6$ & ~ & 5.129E-06 & -- & ~ & 4.884E-06 & -- & ~ & 1.046E-07 & -- & ~ & 1.029E-07 & -- \\ 
        $2^7$ & ~ & 1.739E-06 & 1.561  & ~ & 1.620E-06 & 1.593  & ~ & 1.873E-08 & 2.482  & ~ & 1.343E-08 & 2.938   \\ 
        $2^8$ & ~ & 5.866E-07 & 1.568  & ~ & 5.416E-07 & 1.580  & ~ & 3.338E-09 & 2.489  & ~ & 1.747E-09 & 2.942   \\ 
        $2^9$ & ~ & 1.953E-07 & 1.587  & ~ & 1.809E-07 & 1.582  & ~ & 5.953E-10 & 2.487  & ~ & 2.275E-10 & 2.941   \\ 
        --&&-- & $1.5-\varepsilon$ &&--& $1.5-\varepsilon$ && --& $2.5-\varepsilon$ &&--&$3-\varepsilon$ \\
        \hline
	\end{tabular}
\end{table}

\section{Conclusions}\label{sec:Conclusions}
In this paper, we have developed a comprehensive and rigorous numerical analysis framework for a class of semilinear parabolic problems subjected to nonsmooth initial data. By employing a linear Galerkin finite element method in space and a high-order explicit exponential Runge-Kutta (EERK) method in time, we systematically investigated the severe order reduction phenomenon inherent in nonsmooth settings. The primary mathematical difficulty of bounding the higher-order Fréchet derivatives of the Nemytskii operator was rigorously resolved through a combination of fractional power space techniques and analytic semigroup estimates. Ultimately, our theoretical analysis establishes a sharp temporal convergence rate of $\min(1 + \gamma/2 + \rho_1(\gamma)/2, \:p)$, which strictly adapts to the exact regularity of the initial data, thereby significantly improving upon suboptimal estimates frequently encountered in the existing literature.

The theoretical framework established herein not only elucidates the intricate interplay between strong nonlinearity and strong stiffness but also provides a robust and extensible foundation for future investigations. Natural continuations of this work include adapting the current analysis to phase-field models governed by the Allen-Cahn and Cahn-Hilliard equations, where handling non-linear stiffness and phase separation under low-regularity conditions remains profoundly challenging. Furthermore, investigating the convergence behavior of fully implicit or exponential Rosenbrock schemes within this fractional Sobolev framework constitutes another important direction for our forthcoming research.








\section*{Funding}
This work was supported by the Guangdong Basic and Applied Basic Research Foundation of China under Grand No.2026A1515012144.

\section*{Author Contributions}
S. Yang performed conceptualization, methodology, software development and wrote the original draft. R. Zhang carried out validation, formal analysis and revised the manuscript. Z. Yu completed formal analysis and validation. J. Fang contributed to conceptualization, methodology and revised the manuscript. All authors reviewed the manuscript.

\section*{Declarations}

\paragraph*{Data Availability}
No datasets were generated or analysed during the study.

\paragraph*{Conflicts of Interest}
The authors declare that they have no conflict of interest.

\paragraph*{Competing interests}
The authors declare no competing interests.

\bibliographystyle{elsarticle-num} 
\bibliography{paper.bib}

\end{document}